\crefname{hypothesis}{Hypothesis}{Hypotheses}
\numberwithin{equation}{section}
\numberwithin{equation}{section}
\DeclareMathOperator*{\argmin}{arg\,min}
\newcommand{\dd}{\,{\rm d}}
\newcommand{\bs}{\boldsymbol}
\def\bu{{\boldsymbol u}}
\def\bn{{\boldsymbol n}}
\def\be{{\boldsymbol e}}
\def\bv{{\boldsymbol v}}
\def\bw{{\boldsymbol w}}
\def\bx{{\boldsymbol x}}
\def\dk{{\boldsymbol K}}
\def\dd{{\boldsymbol D}}
\def\T{{\mathcal{T}}}
\def\N{{\mathcal{N}}}
\def\bT{{\boldsymbol{T}}}
\def\bxi{{\boldsymbol{\xi}}}
\def\bta{{\boldsymbol{\eta}}}
\def\btheta{{\boldsymbol{\theta}}}
\title{Newton's method and its hybrid with machine learning for Navier-Stokes Darcy Models discretized by mixed element methods\thanks{Submitted to the editors DATE.
\funding{The  work  of  the  first  author  was  supported  by the National Natural Science Foundation of China (Grant No. 12071289), and the Strategic Priority Research Program of the Chinese Academy of Sciences (Grant No. XDA25010402). The work of the second author was supported by the National Natural Science Foundation of China (Grant No. 12301519)}}}
\author{Jianguo~Huang\thanks{School of Mathematical Sciences, Shanghai Jiao Tong University, Shanghai 200240, China
  (\email{jghuang@sjtu.edu.cn}, \email{penghui23@sjtu.edu.cn},
   \email{wu1150132305@sjtu.edu.cn}).}
\and Hui Peng\footnotemark[2]
\and Haohao Wu\footnotemark[2]}
\begin{document}

\maketitle

% REQUIRED
\begin{abstract}
This paper focuses on discussing Newton's method and its hybrid with machine learning for the steady state Navier-Stokes Darcy model discretized by mixed element methods. First, a Newton iterative method is introduced for solving the relative discretized problem. It is proved technically that this method converges quadratically with the convergence rate independent of the finite element mesh size, under certain standard conditions. Later on, a deep learning algorithm is proposed for solving this nonlinear coupled problem. Following the ideas of an earlier work by Huang, Wang and Yang (2020), an Int-Deep algorithm is constructed by combining the previous two methods so as to further improve the computational efficiency and robustness. A series of numerical examples are reported to show the numerical performance of the proposed methods.
\end{abstract}

\begin{keywords}
Navier Stokes-Darcy model $\cdot$ Deep learning $\cdot$  Newton iterative method $\cdot$ Int-Deep method $\cdot$ Convergence analysis
\end{keywords}

% REQUIRED
\begin{AMS}
65N12, 65N15, 65N22, 65N30
\end{AMS}

\section{Introduction}
The Navier-Stokes Darcy model is frequently encountered in various industrial engineering scenarios, for instance, in groundwater \cite{Cao-2010-finite, Discacciati-2002-math, Layton-2013-ana},  flow in porous media \cite{Arbogast-2006-Hom, Arbogast-2007-com}, industrial filtrations \cite{Hanspal-2006-num} and so on. Due to its importance, many numerical methods have been developed and analyzed for the coupled nonlinear model, which can be roughly divided into two categories. The first one focuses on numerically solving the coupled system directly. Historically, Girault and Rivi\'{e}re \cite{DG2009} propose and analyze a discontinuous Galerkin method for the Navier-Stokes Darcy model. Following \cite{DG2009}, Chidyagwai and Rivi\'{e}re \cite{Chidyagwai-2009-on} develop a hybrid coupled method, where the continuous finite elements are employed in the free flow region and the discontinuous Galerkin finite elements in the porous medium region, respectively. Wu and Mei \cite{Wu-2016-non} discretize the model with a non-conforming finite volume element method. Cesmelioglu and Rhebergen present and analyze in \cite{Cesmelioglu2023hybrid} a strongly conservative hybridizable discontinuous Galerkin scheme. The other strategy is to decouple the model first and then numerically solve two subproblems separately. %apply local solvers to sub-models.
In \cite{xujinchao2009}, Cai, Mu and Xu propose a decoupled and linearized two-grid algorithm. The two-grid decoupled method is further studied in \cite{Zuo2015, jia-2016-mod}. In addition, Cai, Huang and Mu also develop a multi-grid algorithm in \cite{2018-cai-some}. He et al. \cite{He-2015-domain} design a domain decomposition method for the Navier-Stokes Darcy model with the BJ interface condition. In order to further improve computational efficiency, {Du et al. develop in \cite{Wang-2021-novel, Du-2022-local} a series of parallel algorithms based on decoupled model. We mention that all the previous methods are used to solve steady-state Navier-Stokes Darcy models. In fact, there are deep and through works on numerical solution for unsteady state models, and we skip the details due to the limit of space.

%Due to its importance, many researchers have been devoted to studying the model in theory and computation. In \cite{Badea-2006-math,Discacciati-2004-domain}, the existence and local uniqueness of a weak solution are presented under suitable data restrictions. The well-posedness of the model is further studied in \cite{DG2009}, with a simple and different proof. Following \cite{DG2009}, Rivi\'{e}re et al. \cite{Chidyagwai-2009-on} compare and analyze two models which differ by the consideration of the inertial effects in the balance of forces on the interface. In \cite{Badea-2010-num, Discacciati-2009-navier}, %the authors
%Discacciati et al. formulate the coupled problem as an interface equation and analyze the associate (nonlinear) Steklov-Poincar\'{e} operators.  With the development of the model, researchers pay more attention to improve the computational efficiency. %various numerical methods (e.g. \cite{Du-2022-local, Qiu-2020-domain,Zuo2015}) have been employed to solve this model.

Both the direct and decoupling methods require solving nonlinear system of equations globally or locally through iterative methods, e.g. Newton's method. However, there are few works on investigating the global convergence analysis of these iterative methods. As far as we know, only Badea et al. \cite{Badea-2010-num} study Newton's method for the Navier-Stokes Darcy model in the case of infinite dimensions. The analysis is rather involved; the key techniques are first reformulating the coupled nonlinear problem as an interface equation and then developing the underlying convergence analysis of Newton iterative method by means of the Kantorovich theorem. However, when applying the previous arguments to the nonlinear system arising from discretization of the Navier-Stokes Darcy model by mixed element methods, it is very hard to show the convergence rate is uniformly bounded with respect to the finite element mesh size $h$. This study is important for real applications. In fact, if the convergence rate goes to $1$ when $h$ goes to zero, then we even cannot observe convergence of the underlying iterative method due to the rounding-off error. The other point to be emphasized is that Newton's method is locally convergent, so it is very challenging to develop a strategy on the choice of initial guesses to ensure convergence. For our problem under discussion, a common way is to set the initial guess as the numerical solution of the corresponding linear Stokes Darcy problem. However, such a choice is not robust with respect to the model coefficients; we refer to numerical results in subsection \ref{alg:int-deep} for details. Therefore, it's necessary to develop an effective approach to choosing initial guesses in order to improve the computational performance and robustness of Newton iterative method.

With the above discussion in mind, our study in this paper is twofold. First of all,  we analyze in detail the convergence of Newton iterative method for the Navier-Stokes Darcy model discretized by mixed finite element methods. Different from the approach in \cite{Badea-2010-num}, we derive the required result (cf. Theorem \ref{convergence-rate}) in another way. Our analysis relies on a key observation which indicates that a critical interface term in the Navier-Stokes equations, which is connected with Darcy equations, can be decomposed into a positive term plus an easily estimated term. Combining this finding with the analysis of Newton's iteration for Navier-Stokes model in \cite{He-2009-con}, we prove that the iterative method (cf. \eqref{New-alg-1}-\eqref{New-alg-3}) is convergent quadratically under certain standard conditions, with the convergence rate independent of the finite element mesh size $h$.

On the other hand, it is observed that a deep learning initialized iterative (Int-Deep) method is introduced in \cite{intdeep2020} for nonlinear variational problems. It is a hybrid iterative method, consisting of two phases. In the first phase, an expectation minimization problem formulated for a given nonlinear partial differential equations (PDE) is solved by deep learning (DL) methods. In the second phase, the numerical solution from the first phase is interpolated as the initial guess, and some Newton-type iterative methods are used to solve the finite-dimensional problem discretized by finite element methods, which converges rapidly to the finite element solution. This method has been used successfully to solve semi-linear elliptic problems, linear and nonlinear eigenvalue problems (cf. \cite{intdeep2020}). The success of the method may rely partly on the so-called frequency principle mentioned in \cite{Xu-2020-frequency, Xu-2019-frequency}, which asserts that a deep neural network (DNN) tends to fit training data by a low-frequency function. This result implies that the DL solution with few iteration steps may capture the low frequency components of the exact solution, so it is reasonable to act as an initial guess of an iterative method for a variational problem.

Thus, the second goal of this paper is to use the ideas of the above Int-Deep method to solve the Navier-Stokes Darcy model combined with Newton iterative method just mentioned. To this end, we first design a physics informed neural networks (PINN)-type \cite{Karniadakis2019,Karniadakis2021} DL algorithm for the Navier-Stokes Darcy model, through expressing the unknowns with ResNet functions \cite{2016-Resnet} and constructing a log-loss function (see \eqref{eq:3.1}) to improve computational efficiency. Furthermore, following the ideas in \cite{intdeep2020}, we introduce the Int-Deep
algorithm for the discrete Navier-Stokes Darcy model (cf. \eqref{FEM-1}-\eqref{FEM-2}). In other words, we solve the discrete problem by means of Newton iterative method, with the interpolant of the DNN solution as an initial guess. Finally, we perform a series of numerical examples to confirm that this Int-Deep algorithm is able to reach the accuracy of the finite element method with few iteration steps. In particular, the algorithm is shown to be robust with respect to some physical parameters, e.g., the viscosity coefficient of fluid and the hydraulic conductivity tensor of the porous medium.

The rest of the paper is organized as follows. In Section \ref{sec:model}, we introduce the model problem and recall some results of the finite element method for the nonlinear coupled system. In Section \ref{sec:newton}, we discuss the stability and convergence of  Newton iterative method under some conditions. In Section \ref{sec:algo}, a DL algorithm for the Navier-Stokes Darcy model is designed, and then the Int-Deep algorithm is developed for the Navier-Stokes Darcy model. In Section \ref{sec:test}, we present some numerical examples to demonstrate the accuracy and efficiency of the Int-Deep algorithm.

\section{The Navier-Stokes Darcy problem and its discretization}\label{sec:model}
In this section, we aim to introduce the Navier-Stokes Darcy model and its finite element discretization. We also recall some important existing results which will be frequently used  later on.
\subsection{Navier-Stokes Darcy model}
Let $\Omega \subset R^{d}~(d=2,~3)$ be a bounded polytopal domain, which is subdivided into a free fluid region $\Omega_{f}$ and a porous media region $\Omega_{p}$ by an interface $\Gamma$, as shown in Figure \ref{fig:1}.
Let $\Gamma_f = \partial\Omega_f\setminus \Gamma$ and $\Gamma_p = \partial\Omega_p\setminus \Gamma$ be the outer boundaries of the domains $\Omega_f$ and $\Omega_p$, respectively. Denote by
$\boldsymbol{n}_{f}$ and $\boldsymbol{n}_{p}$ the unit outward normal vectors on $\partial \Omega_{f}$ and $\partial \Omega_{p}$, respectively. The unit tangential vector on $\Gamma$ is denoted by $\boldsymbol{\tau}$.
\begin{figure}[htb]\begin{center}\setlength\unitlength{1in}
    \begin{picture}(3,2.5)
 \put(0,0){\includegraphics[height=2.5in,width=2.2in]{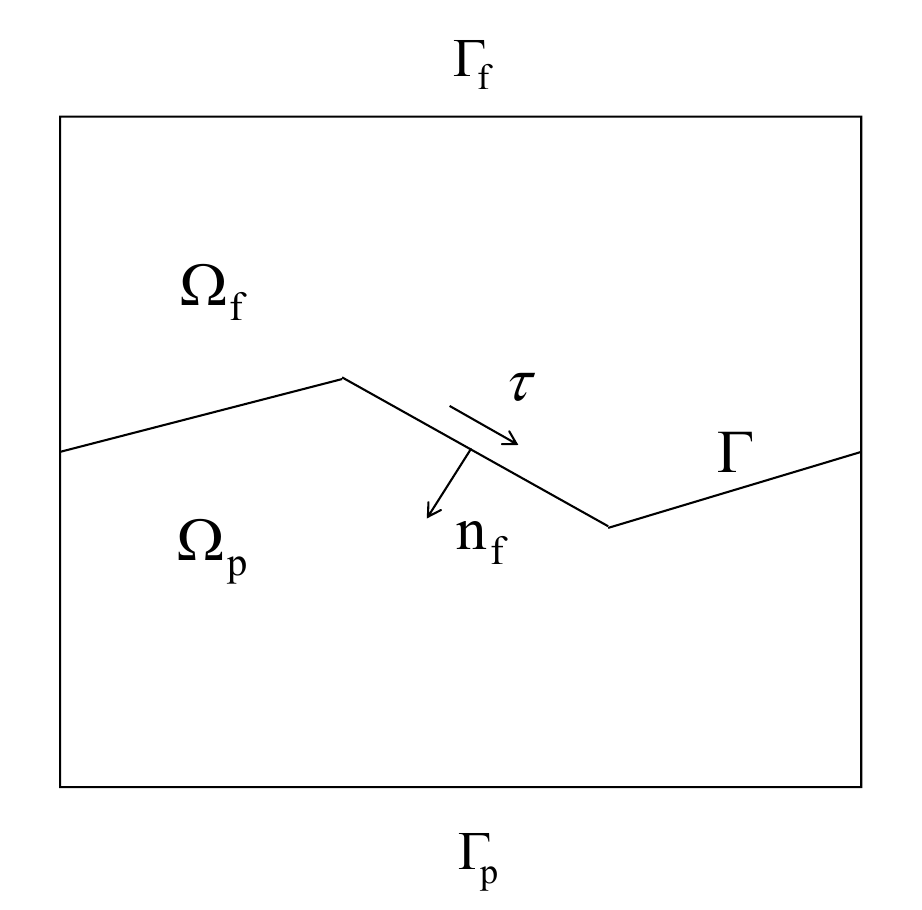}}
    \end{picture}
\caption{ {\color{black}Domain schematic for Naiver-Stokes Darcy coupled flow.}  } \label{fig:1}
\end{center}
\end{figure}

The free flow in $\Omega_{f}$ is governed by the steady Navier-Stokes equations:
\begin{equation}\label{model-pro1}
  \begin{cases}
  -\operatorname{div}(\boldsymbol{T}(\boldsymbol{u}, p))+\rho(\boldsymbol{u} \cdot \nabla) \boldsymbol{u}=\boldsymbol{f}_f & \text{in}~ \Omega_{f}, \\ \operatorname{div} \boldsymbol{u}=0  &  \text{in}~ \Omega_{f},
  \end{cases}
\end{equation}
where $\boldsymbol{u}$ and $p$ indicate the velocity field and the pressure field of the flow, respectively;
$\nu>0$ stands for the viscosity coefficient,
$
\boldsymbol{T}(\boldsymbol{u}, p)=2 \nu \boldsymbol{D}(\boldsymbol{u})-p \boldsymbol{I}
$
is the stress tensor, and
$
\boldsymbol{D}(\boldsymbol{u})=\frac{1}{2}\left(\nabla \boldsymbol{u}+\nabla^{{\rm T}} \boldsymbol{u}\right)
$
is the deformation rate tensor; $\rho$ denotes the density of the flow and $\boldsymbol{f}_{f}$ is the external force.

The flow motion in the porous media region $\Omega_{p}$ is described by Darcy's law:
\begin{equation}{\label{model-pro2}}
-\operatorname{div}(\bs{K}\nabla \varphi)=f_p \quad  \text{in}~\Omega_p,
\end{equation}
where
$\varphi$ is the hydraulic head, $f_p$ is a source term and $\bs{K}$ represents the hydraulic conductivity tensor. In this paper, we assume that $\bs{K}$ is symmetric $K_{i j}=K_{j i}$, and positive definite, i.e.
$$
\alpha_{1}(\boldsymbol{x}, \boldsymbol{x}) \leq(\boldsymbol{K} \boldsymbol{x}, \boldsymbol{x}) \leq \alpha_{2}(\boldsymbol{x}, \boldsymbol{x}) \quad \forall\; \boldsymbol{x} \in ~\Omega_{p},
$$
for two positive constants $\alpha_{1}$ and $\alpha_{2}$.
\vskip 0.2cm

%Three appropriate interface conditions are imposed on the interface $\Gamma$:
The physical quantities are coupled on the interface $\Gamma$ through three interface conditions:
    \begin{equation}\label{model-pro3}
     \begin{cases}
    \boldsymbol{u} \cdot \boldsymbol{n}_{f}=-\boldsymbol{K} \nabla \varphi \cdot \boldsymbol{n}_{f} ,\\
    {\left[-\boldsymbol{T}(\boldsymbol{u}, p) \boldsymbol{n}_{f}\right] \cdot \boldsymbol{n}_{f}=\rho g \varphi},\\
    {\left[-\boldsymbol{T}(\boldsymbol{u}, p)  \boldsymbol{n}_{f}\right] \cdot \boldsymbol{\tau}_{j}=\frac{\nu \alpha}{\sqrt{\boldsymbol{\tau}_{j} \cdot \nu \boldsymbol{K} \cdot \boldsymbol{\tau}_{j}}} \boldsymbol{u} \cdot \boldsymbol{\tau}_{j},\; j=1, \ldots, d-1},
    \end{cases}
    \end{equation}
where $\left\{\boldsymbol{\tau}_{j}\right\}_{j=1}^{d-1}$ are linearly independent unit tangential vectors on $\Gamma$, and $\alpha$ is an experimentally determined positive parameter.

For simplicity, we consider homogeneous Dirichlet boundary conditions on the outer boundaries:
\begin{equation}\label{model-pro4}
 \begin{cases}
\boldsymbol{u}=\boldsymbol{0} & \text { on } \Gamma_{f}, \\ \varphi=0 & \text { on } \Gamma_{p} .
\end{cases}
\end{equation}

In order to describe the variational formulation of this model, we have to introduce some function spaces in advance.
Denote the admissible spaces for velocity, pressure and hydraulic head, respectively by
$$
\begin{aligned}
\boldsymbol{X}_{f} &=\Big\{\boldsymbol{v} \in \boldsymbol{H}^{1}(\Omega_{f})=[H^{1}(\Omega_{f})]^{d} \mid \boldsymbol{v}=\boldsymbol{0} \text { on } \Gamma_{f}\Big\},\qquad   Q=L^{2}\left(\Omega_{f}\right), \\
X_{p} &=\left\{\psi \in H^{1}\left(\Omega_{p}\right) \mid \psi=0 \text { on } \Gamma_{p}\right\}.
\end{aligned}
$$
% Let $\underline{X}=\mathbf{X}_f\times X_p$, and equipped with the following norm:
% \begin{eqnarray*}
% \|\underline{v}\|_{\underline{X}}=2\nu\|\mathbf{D}(\bv)\|^2_{0,\Omega_f}+\rho g\|\mathbf{K}\nabla\psi\|^2_{0,\Omega_p},\quad \underline{v}=(\bv,\psi)\in \underline{X}.
% \end{eqnarray*}
 Then, the variational formulation of the coupled Navier-Stokes Darcy model $(\ref{model-pro1})-(\ref{model-pro4})$ is given as follows.

Find $\bs{u} \in \bs{X}_{f},\varphi\in X_{p}$ and $q\in Q $ such that
% \begin{equation}\label{weak-form-1}
% \begin{cases}
% a(\underline{u}, \underline{v})+c(\boldsymbol{u}; \boldsymbol{u}, \boldsymbol{v})+b(\boldsymbol{v}, p)=f(\underline{v}) & \forall~ \underline{v}=(\boldsymbol{v}, \psi) \in \underline{X}, \\
% b(\boldsymbol{u}, q)=0 & \forall~ q \in Q.
% \end{cases}
% \end{equation}
\begin{equation}\label{weak-form-1}
\begin{cases}
 &a_f(\boldsymbol{u},\boldsymbol{v})+a_p(\varphi,\psi)+a_{\Gamma}(\boldsymbol{u},\psi;\boldsymbol{v},\varphi)+c(\boldsymbol{u};\boldsymbol{u},\boldsymbol{v})+b(\boldsymbol{v},p)\\
 &=(\boldsymbol{f}_f,\boldsymbol{v})+\rho g(f_p,\psi)~~~~\forall~ \boldsymbol{v}\in \bs{X}_{f}, \psi \in X_{p},\\
&b(\boldsymbol{u},q)=0~~~~\forall ~ q \in Q,
\end{cases}
\end{equation}
where,
$$
\begin{aligned}
 & a_{f}(\boldsymbol{u}, \boldsymbol{v})= \int_{\Omega_{f}} 2\nu \boldsymbol{D}(\boldsymbol{u}):\boldsymbol{D}(\boldsymbol{v})+\sum_{j=1}^{d-1} \frac{\nu \alpha}{\sqrt{ \boldsymbol{\tau}_{j} \cdot\nu \boldsymbol{K} \boldsymbol{\tau}_{j}}} \int_{\Gamma}\left(\boldsymbol{u} \cdot \boldsymbol{\tau}_{j}\right)\left(\boldsymbol{v} \cdot \boldsymbol{\tau}_{j}\right), \\
&a_{p}(\varphi, \psi)=\rho g \int_{\Omega_{p}}  \boldsymbol{K} \nabla \varphi \cdot\nabla \psi,\quad
a_{\Gamma}(\bu,\psi;\bv,\varphi)=\rho g \int_{\Gamma}(\varphi \boldsymbol{v}-\psi \boldsymbol{u}) \cdot \boldsymbol{n}_{f},
\\
&c(\boldsymbol{u}; \boldsymbol{v}, \boldsymbol{w})=\rho \int_{\Omega_{f}}(\boldsymbol{u} \cdot \nabla) \boldsymbol{v} \cdot \boldsymbol{w},
\quad b(\boldsymbol{v}, p)=-\int_{\Omega_{f}} p \operatorname{div} \boldsymbol{v},\\
&(\boldsymbol{f}_{f},\boldsymbol{v})=\int_{\Omega_{f}} \boldsymbol{f}_{f} \cdot \boldsymbol{v}, \quad (f_p,\psi)= \int_{\Omega_{p}} f_{p} \psi.
\end{aligned}
$$
% with
% $$
% \begin{aligned}
% &a_{f}(\boldsymbol{u}, \boldsymbol{v})= \int_{\Omega_{f}} 2\nu \boldsymbol{D}(\boldsymbol{u}):\boldsymbol{D}(\boldsymbol{v})+\sum_{j=1}^{d-1} \frac{\nu \alpha}{\sqrt{ \boldsymbol{\tau}_{j} \cdot\nu \boldsymbol{K} \boldsymbol{\tau}_{j}}} \int_{\Gamma}\left(\boldsymbol{u} \cdot \boldsymbol{\tau}_{j}\right)\left(\boldsymbol{v} \cdot \boldsymbol{\tau}_{j}\right), \\
% ,\quad a_{\Gamma}(\underline{u}, \underline{v})=\rho g \int_{\Gamma}(\varphi \boldsymbol{v}-\psi \boldsymbol{u}) \cdot \boldsymbol{n}_{f} .
% \end{aligned}
% $$
Here and hereafter, we omit the infinitesimal element over an integration symbol when there is no confusion caused.

The existence and uniqueness of solution of $(\ref{weak-form-1})$  can be found in \cite{DG2009}.

\subsection{Finite element approximation}

 In this subsection, we shall give the finite element discretization of the Navier-Stokes Darcy model and review some existing results for subsequent analysis.

Let $\T_{f,h}$ (resp. $\T_{p,h}$) be a nondegenerate quasi-uniform triangulation of $\Omega_f$ (resp. $\Omega_p$). The partition $\T_{f,h}$ matches with the partition $\T_{p,h}$ at the interface $\Gamma$. Let $\T_h=\T_{f,h}\cup \T_{p,h}$ be the partition of the whole domain $\Omega$. For each element $T\in\T_h$, we denote by $h_T$ the diameter of $T$; the mesh size of $\T_h$ is defined by $h=\max_{T\in\T_h}h_T$. Let $k$ be a non-negative integer, and denote by $\mathbb{P}_k(T)$ the set of polynomials with the degree no more than $k$ on element $T\in \T_h$. Moreover, we use $C$ (with or without subscripts) to denote a generic positive constant independent of the mesh size $h$, which may take different values at different occurrences.

There are various combinations of the stable Stokes finite element pairs \cite{Boffi-2013-mixed, hughes-1987-finite} (such as MINI element, Taylor-Hood element, conforming Crouzeix-Raviart element) %in the free fluid region
with continuous finite elements for Darcy equations. For convenience, we focus on the traditional Taylor-Hood element as an typical example in this paper. It is not difficult to extend the corresponding theory to other elements.

Define some finite element spaces by
\begin{eqnarray*}
\boldsymbol{X}_{f,h}&=&\{\boldsymbol{v}_h\in \boldsymbol{X}_f,~\boldsymbol{v}_h|_{T}\in [\mathbb{P}_{2}(T)]^d~\forall~T\in\T_{f,h}\},\\
Q_h&=&\{q_h\in Q\cap C^0(\Omega),~q_h|_T\in \mathbb{P}_1(T)~\forall~T\in\T_{f,h}\}
,\\
X_{p,h}&=&\{\psi_h\in X_p,~\psi_h|_T\in \mathbb{P}_2(T)~\forall~T\in\T_{p,h}\}.
\end{eqnarray*}

With these discrete function spaces, we give the finite element method for the coupled Navier-Stokes Darcy
model $(\ref{model-pro1})-(\ref{model-pro4})$: Find $\boldsymbol{u}_h\in \boldsymbol{X}_{f,h}$, $p_h\in Q_h$, and $\varphi_h\in X_{p,h}$ such that
\begin{align}\label{FEM-1}
&a_f(\boldsymbol{u}_h,\boldsymbol{v}_h)+a_p(\varphi_h,\psi_h)+a_{\Gamma}(\boldsymbol{u}_h,\psi_h;\boldsymbol{v}_h,\varphi_h)+c(\boldsymbol{u}_h;\boldsymbol{u}_h,\boldsymbol{v}_h)+b(\boldsymbol{v}_h,p_h)\\\nonumber
&=(\boldsymbol{f}_f,\boldsymbol{v}_h)+\rho g(f_p,\psi_h)~~~~\forall~ \boldsymbol{v}_h\in \bs{X}_{f,h},\;\psi_h \in X_{p,h},\\\label{FEM-2}
&b(\boldsymbol{u}_h,q_h)=0~~~~\forall ~ q_h \in Q_h.
\end{align}

To proceed with the forthcoming discussion about the well-posedness and convergence of the finite element method, %$(\ref{FEM-1})-(\ref{FEM-2})$,
we first recall some basic inequalities in Sobolev spaces as follows \cite{Adams-1975-sob}. There exist constants $C_p,~C_t, ~C_k$ and $C_s$ only depending on the domain $\Omega_f$, and $\tilde{C}_p$ and $\tilde{C}_t$ only depending on the domain $\Omega_p$, such that for all $\bv\in \mathbf{X}_f$ and $\psi\in X_p$,%prove the well-posedness of the weak form $(\ref{eq:1.7})$.
%\begin{eqnarray*}
%(\text{Poincar\'e inequality})~~~~~~\|\bv\|_{0,\Omega_f}&\leq& C_p|\bv|_{1,\Omega_f},\hspace{1.5cm} \|\psi\|_{0,\Omega_p}\leq \tilde{C}_p|\psi|_{1,\Omega_p}.\\
%(\text{The trace inequality})\hspace{0.75cm}\|\bv\|_{0,\Gamma}&\leq& C_t|\bv|_{1,\Omega_f},\hspace{1.7cm} \|\psi\|_{0,\Gamma}\leq \tilde{C}_t|\psi|_{1,\Omega_p}.\\
%(\text{Korn's inequality})\hspace{1.25cm}|\bv|_{1,\Omega_f}&\leq& C_k\|\boldsymbol{D}(\bv)\|_{0,\Omega_f}.\\
%(\text{Sobolev inequality})~~~~\|\bv\|_{L^4(\Omega_f)}&\leq& C_s|\nabla\bv|_{1,\Omega_f}.
%\end{eqnarray*}
\begin{eqnarray*}
(\text{the Poincar\'e inequality})~~~~~~&&\|\bv\|_{0,\Omega_f}\leq C_p|\bv|_{1,\Omega_f},\hspace{1.5cm} \|\psi\|_{0,\Omega_p}\leq \tilde{C}_p|\psi|_{1,\Omega_p}.\\
(\text{the trace inequality})\hspace{1.3cm}&&\|\bv\|_{0,\Gamma}\leq C_t|\bv|_{1,\Omega_f},\hspace{1.75cm} \|\psi\|_{0,\Gamma}\leq \tilde{C}_t|\psi|_{1,\Omega_p}.\\
(\text{Korn's inequality})\hspace{1.7cm}&&|\bv|_{1,\Omega_f}\leq C_k\|\boldsymbol{D}(\bv)\|_{0,\Omega_f}.\\
(\text{the Sobolev inequality})\hspace{0.95cm}&&\|\bv\|_{L^6(\Omega_f)}\leq C_s|\nabla\bv|_{1,\Omega_f}.
\end{eqnarray*}

Using the H{\"{o}lder inequality, the Sobolev inequality and Korn's inequality, we immediately have (cf. \cite{Temam-1979-Navier})
\begin{align}\label{tri-est}
c(\bu;\bv,\bw)\leq \mathcal{N}_d\|\dd(\bu)\|_{0,\Omega_f}\|\dd(\bv)\|_{0,\Omega_f}\|\dd(\bw)\|_{0,\Omega_f},
\end{align}
with $\N_d$ depending only on $C_s$, $C_k$ and $d$.

In order to describe some important results, we define
\begin{align*}
\|\bm{f}_f\|_*=\frac{(\bm{f}_f,\bv)}{\|\dd(\bv)\|_{0,\Omega_f}},\quad \|f_p\|_{*}=\frac{(f_p,\psi)}{|\psi|_{1,\Omega_p}}.
\end{align*}

The theoretical results given below indicate the well-posedness and convergence of the finite element method $(\ref{FEM-1})-(\ref{FEM-2})$. We refer the interesting readers to $\cite{xujinchao2009}$ for details along this line.  %the specific proof process.% specific proof see $\cite{xujinchao2009}$. %only requires slight adjustments to the process in \cite{xujinchao2009}. %[Cai-PhD].

\begin{theorem}\label{FEM-well-pose}
Assume that $\bm{f}_f\in [L^2(\Omega_f)]^d$, $f_p\in L^2(\Omega_p)$ and the viscosity coefficient $\nu$ satisfies
\begin{align}\label{assume-1}
\nu^{\frac32}\geq \sqrt{2}\mathcal{N}_d\mathcal{R},
\end{align}
with
\begin{align}
\label{assume-1a}
\mathcal{R}=\left(\frac{1}{\nu}\|\bm{f}_f\|^2_*+\frac{\rho g}{\alpha_1}\|f_p\|^2_*\right)^{\frac12}.
\end{align}
Then the finite element scheme $(\ref{FEM-1})-(\ref{FEM-2})$ has a unique solution satisfying the following estimate
\begin{align}\label{FEM-priori-est1}
% \|\underline{u}_h\|_{\underline{X}}
2\nu\|\boldsymbol{D}(\bu_{h})\|^2_{0,\Omega_f}+\rho g\|\boldsymbol{K}\nabla\varphi_{h}\|^2_{0,\Omega_p}\leq \mathcal{R}^2.
\end{align}
\end{theorem}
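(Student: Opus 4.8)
The plan is to follow the by-now-classical strategy for mixed finite element discretizations of Navier--Stokes-type problems, establishing the a priori bound, existence, and uniqueness in turn; since the authors attribute the result to \cite{xujinchao2009}, I would adapt that argument to the present notation. The first reduction is to eliminate the pressure by passing to the discretely divergence-free subspace $\boldsymbol{V}_h=\{\boldsymbol{v}_h\in\boldsymbol{X}_{f,h}:b(\boldsymbol{v}_h,q_h)=0\ \forall q_h\in Q_h\}$, so that \eqref{FEM-1}--\eqref{FEM-2} becomes the reduced problem of finding $(\boldsymbol{u}_h,\varphi_h)\in\boldsymbol{V}_h\times X_{p,h}$ satisfying the momentum/Darcy equation against all $(\boldsymbol{v}_h,\psi_h)\in\boldsymbol{V}_h\times X_{p,h}$. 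The pressure $p_h$ is recovered at the very end from the discrete inf--sup (LBB) stability of the Taylor--Hood pair; this also yields the missing component of uniqueness.

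For the a priori estimate \eqref{FEM-priori-est1}, I would test the reduced equation with $(\boldsymbol{v}_h,\psi_h)=(\boldsymbol{u}_h,\varphi_h)$ and exploit four structural facts: (i) $a_f(\boldsymbol{u}_h,\boldsymbol{u}_h)\geq 2\nu\|\dd(\boldsymbol{u}_h)\|_{0,\Omega_f}^2$, since the interfacial slip contribution is a sum of squares and hence non-negative; (ii) $a_p(\varphi_h,\varphi_h)$ is coercive and comparable to $\rho g\|\boldsymbol{K}\nabla\varphi_h\|_{0,\Omega_p}^2$ by symmetry and positive definiteness of $\boldsymbol{K}$; (iii) the coupling form is skew-symmetric on the diagonal, $a_\Gamma(\boldsymbol{u}_h,\varphi_h;\boldsymbol{u}_h,\varphi_h)=0$ by its very definition; and (iv) the convective self-interaction $c(\boldsymbol{u}_h;\boldsymbol{u}_h,\boldsymbol{u}_h)$ is controlled using the divergence constraint together with the boundary/interface conditions. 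Bounding the right-hand side by $\|\boldsymbol{f}_f\|_*\|\dd(\boldsymbol{u}_h)\|_{0,\Omega_f}+\rho g\|f_p\|_*|\varphi_h|_{1,\Omega_p}$ and applying Young's inequality then collapses to \eqref{FEM-priori-est1} with constant $\mathcal{R}$ from \eqref{assume-1a}.

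With the bound in hand, existence follows from the standard corollary of Brouwer's fixed-point theorem for continuous maps on finite-dimensional Hilbert spaces: encoding the reduced system as $P(\boldsymbol{\zeta})=\boldsymbol{0}$ on $\boldsymbol{V}_h\times X_{p,h}$ equipped with the natural energy norm, the estimate shows $(P(\boldsymbol{\zeta}),\boldsymbol{\zeta})>0$ on a sphere of radius slightly exceeding $\mathcal{R}$, forcing a zero inside. For uniqueness I would subtract two solutions, test with the difference $(\boldsymbol{e},\xi)$, and split $c(\boldsymbol{u}_h^1;\boldsymbol{u}_h^1,\boldsymbol{e})-c(\boldsymbol{u}_h^2;\boldsymbol{u}_h^2,\boldsymbol{e})=c(\boldsymbol{e};\boldsymbol{u}_h^2,\boldsymbol{e})$, the term $c(\boldsymbol{u}_h^1;\boldsymbol{e},\boldsymbol{e})$ being handled as in (iv). Estimating $c(\boldsymbol{e};\boldsymbol{u}_h^2,\boldsymbol{e})$ by \eqref{tri-est} and inserting $\|\dd(\boldsymbol{u}_h^2)\|_{0,\Omega_f}\leq\mathcal{R}/\sqrt{2\nu}$ from \eqref{FEM-priori-est1} leaves $\bigl(2\nu-\mathcal{N}_d\mathcal{R}/\sqrt{2\nu}\bigr)\|\dd(\boldsymbol{e})\|_{0,\Omega_f}^2\leq 0$, so that $\boldsymbol{e}=\boldsymbol{0}$ and then $\xi=0$; the threshold \eqref{assume-1} is precisely what keeps the bracketed coefficient positive, and with room to spare.

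The step I expect to be the main obstacle is the careful treatment of the convection term in combination with the interface. Because the Taylor--Hood velocity is only \emph{weakly} divergence-free and $\boldsymbol{u}_h$ need not vanish on $\Gamma$, the naive energy identity does not simply annihilate $c(\boldsymbol{u}_h;\boldsymbol{u}_h,\boldsymbol{u}_h)$ but leaves a cubic interfacial contribution of the type $\tfrac{\rho}{2}\int_\Gamma|\boldsymbol{u}_h|^2(\boldsymbol{u}_h\cdot\boldsymbol{n}_f)$; one must argue, in the spirit of the decomposition into a positive part plus an easily estimated part flagged in the introduction, that such terms are either non-negative or absorbable. Doing this while tracking the constants sharply enough to reach the stated $\nu^{\frac32}\geq\sqrt{2}\mathcal{N}_d\mathcal{R}$, rather than a weaker sufficient condition, is the delicate point of the proof.
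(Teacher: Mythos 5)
First, note that the paper itself contains no proof of Theorem \ref{FEM-well-pose}: it is quoted from \cite{xujinchao2009}, so your proposal can only be measured against the standard argument there. Your overall architecture matches it: test with $(\bu_h,\varphi_h)$ in the discretely divergence-free subspace, use exact cancellation of $a_\Gamma$ on the diagonal, absorb the data terms by Young's inequality against $\mathcal{R}$, obtain existence from the Brouwer-type corollary on a ball, prove small-data uniqueness with the splitting $c(\be;\bu_h^2,\be)+c(\bu_h^1;\be,\be)$, and recover $p_h$ from the inf-sup condition \eqref{inf-sup}. Two points of care you handle at least implicitly: the absorption of the convective term uses the bound being proved, so it must be carried out on the closed ball $\{\|\dd(\bv_h)\|_{0,\Omega_f}\le \mathcal{R}/\sqrt{2\nu}\}$ (your sphere argument does this correctly), and in the uniqueness step \emph{both} trilinear remainders must be bounded via \eqref{tri-est}, since $c(\bu_h^1;\be,\be)\neq 0$ for Taylor--Hood; with $\|\dd(\bu_h^i)\|_{0,\Omega_f}\le \nu/(2\N_d)$ from \eqref{assume-1}, each contributes $\tfrac{\nu}{2}\|\dd(\be)\|^2_{0,\Omega_f}$ and the coefficient stays positive, as you claim.

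The one genuinely misdirected step is your final paragraph. The convective self-interaction is \emph{not} handled by integrating by parts and then arguing a sign for the leftover $\tfrac{\rho}{2}\int_\Gamma|\bu_h|^2(\bu_h\cdot\bn_f)$: no such sign is available, since $\bu_h\cdot\bn_f$ changes sign along $\Gamma$ (the flow crosses the interface in both directions, as the paper's own Section 5.2 experiments illustrate), and with Taylor--Hood the volume term $\tfrac{\rho}{2}\int_{\Omega_f}(\operatorname{div}\bu_h)|\bu_h|^2$ also survives. The standard route avoids integration by parts entirely and bounds the whole term directly, $|c(\bu_h;\bu_h,\bu_h)|\le \N_d\|\dd(\bu_h)\|^3_{0,\Omega_f}$ by \eqref{tri-est}, then absorbs it using \eqref{assume-1} inside the ball --- exactly the mechanism the paper itself uses later in \eqref{FEM-solu-est-1}. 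Relatedly, you misattribute the ``positive term plus easily estimated term'' decomposition flagged in the introduction: that device concerns the Darcy interface term $\rho g\langle\varphi_h,\bu_h\cdot\bn_f\rangle_\Gamma$ arising in the Newton analysis of Section 3 (where one cannot test with $\psi_h=\varphi_h$), not the well-posedness proof, where $a_\Gamma$ cancels exactly as in your item (iii). Finally, a cosmetic point: $a_p(\varphi_h,\varphi_h)$ equals $\rho g\|\dk^{1/2}\nabla\varphi_h\|^2_{0,\Omega_p}$ exactly, which is what \eqref{FEM-priori-est1} should be read as (compare Theorem \ref{FEM-err-est}); ``comparable'' would lose the sharp constant in the stated estimate.
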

\begin{remark}
Under the assumption \eqref{assume-1}, it follows from \eqref{assume-1a} that
\begin{align}\label{FEM-priori-est2}
\|\dd(\bu_h)\|_{0,\Omega_f}\leq \frac{\nu}{2\N_d}.
\end{align}
\end{remark}

\begin{lemma}
There is a constant $\beta>0$ independent of $h$ such that
\begin{align}\label{inf-sup}
\inf_{q_h\in Q_h}\sup_{\boldsymbol{v}_h\in \boldsymbol{X}_{f,h}}\frac{b(\boldsymbol{v}_{s,h},q_h)}{\|\dd(\boldsymbol{v}_h)\|_{0,\Omega_f}\|q_h\|_{0,\Omega_f}}\geq \beta.
\end{align}
\end{lemma}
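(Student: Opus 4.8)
The plan is to establish (\ref{inf-sup}) as the standard Taylor--Hood inf-sup condition for the Stokes subproblem on $\Omega_f$, noting first that the form $b(\boldsymbol{v}_h,q_h)=-\int_{\Omega_f}q_h\operatorname{div}\boldsymbol{v}_h$ involves only the free-flow variables, so the Darcy region plays no role. I would begin by replacing the deformation-tensor norm $\|\dd(\boldsymbol{v}_h)\|_{0,\Omega_f}$ in the denominator by the seminorm $|\boldsymbol{v}_h|_{1,\Omega_f}$: these are equivalent on $\boldsymbol{X}_f$ since $\|\dd(\boldsymbol{v}_h)\|_{0,\Omega_f}\le|\boldsymbol{v}_h|_{1,\Omega_f}\le C_k\|\dd(\boldsymbol{v}_h)\|_{0,\Omega_f}$ by Korn's inequality, so it suffices to prove the estimate in $|\cdot|_{1,\Omega_f}$ and absorb $C_k$ into $\beta$. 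The standard route is then Fortin's lemma: it is enough to construct a bounded operator $\Pi_h:\boldsymbol{X}_f\to\boldsymbol{X}_{f,h}$ with $b(\Pi_h\boldsymbol{v}-\boldsymbol{v},q_h)=0$ for all $q_h\in Q_h$ and $|\Pi_h\boldsymbol{v}|_{1,\Omega_f}\le C|\boldsymbol{v}|_{1,\Omega_f}$ with $C$ independent of $h$, since these two properties transfer the continuous inf-sup to the discrete one.

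The continuous inf-sup I would obtain from surjectivity of the divergence: for any $q\in Q=L^2(\Omega_f)$ I need $\boldsymbol{v}\in\boldsymbol{X}_f$ with $\operatorname{div}\boldsymbol{v}=q$ and $|\boldsymbol{v}|_{1,\Omega_f}\le C\|q\|_{0,\Omega_f}$. The point worth emphasizing is that, unlike the pure-Dirichlet Stokes problem, the pressure space is the full $L^2(\Omega_f)$ without a zero-mean constraint: because $\boldsymbol{X}_f$ leaves the velocity free on $\Gamma$, the identity $\int_{\Omega_f}\operatorname{div}\boldsymbol{v}=\int_{\Gamma}\boldsymbol{v}\cdot\boldsymbol{n}_f$ lets one realize any prescribed mean by a field carrying suitable flux across $\Gamma$. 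Splitting $q$ into its mean and mean-zero parts, I would treat the mean-zero part with the classical Bogovskii right-inverse of $\operatorname{div}$ on $\boldsymbol{H}^1_0(\Omega_f)$ and the constant part with an explicit smooth field supported away from $\Gamma_f$, yielding the continuous inf-sup with an $h$-independent constant $\beta_c>0$.

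The main obstacle is the Fortin operator and its $h$-uniform stability. I would set $\Pi_h=\Pi_h^{(1)}+\Pi_h^{(2)}$, where $\Pi_h^{(1)}$ is a Scott--Zhang (or Cl\'ement) quasi-interpolant into $\boldsymbol{X}_{f,h}$ that preserves the homogeneous condition on $\Gamma_f$ and is $H^1$-stable, and $\Pi_h^{(2)}$ is a correction built from the quadratic edge ($2$D) or face ($3$D) degrees of freedom of the $\mathbb{P}_2$ velocity. An elementwise integration by parts, using that $q_h$ is continuous with $\nabla q_h$ piecewise constant, gives $b(\boldsymbol{w},q_h)=\sum_{T}\nabla q_h|_T\cdot\int_T\boldsymbol{w}-\int_{\Gamma}q_h\,\boldsymbol{w}\cdot\boldsymbol{n}_f$ for $\boldsymbol{w}=\Pi_h\boldsymbol{v}-\boldsymbol{v}$ (the $\Gamma_f$ contribution drops since $\boldsymbol{w}=\boldsymbol{0}$ there), so it suffices to choose $\Pi_h^{(2)}$ so that the element means $\int_T\boldsymbol{w}$ and the interface flux moments $\int_{\Gamma}q_h\,\boldsymbol{w}\cdot\boldsymbol{n}_f$ all vanish. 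Two points demand care: the stability of $\Pi_h^{(2)}$ relies on the quasi-uniformity and nondegeneracy of $\T_{f,h}$ through inverse and scaling estimates for the bubble basis, and the edges or faces lying on $\Gamma$ must be used in the correction --- they are available precisely because the velocity is unconstrained on $\Gamma$ --- so that the interface moments are annihilated as well. Collecting the $h$-uniform bounds for $\Pi_h^{(1)}$ and $\Pi_h^{(2)}$ produces the Fortin constant, which together with the continuous inf-sup yields (\ref{inf-sup}) with $\beta$ independent of $h$. As an alternative avoiding an explicit operator, one may instead verify (\ref{inf-sup}) by Stenberg's macroelement technique, checking a local inf-sup on each macroelement together with a global connectivity condition on the patches.
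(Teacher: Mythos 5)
The first thing to note is that the paper does not prove this lemma at all: it is stated as a known property of the Taylor--Hood pair, with the reader referred to \cite{xujinchao2009}. So your attempt is to be measured against the standard literature rather than an in-paper argument. Your framing is correct on two genuinely important points: the reduction from $\|\dd(\bv_h)\|_{0,\Omega_f}$ to $|\bv_h|_{1,\Omega_f}$ via Korn's inequality (legitimate here because functions in $\boldsymbol{X}_f$ vanish on $\Gamma_f$), and the observation that $Q_h$ carries no zero-mean constraint, so the constant pressure mode must be controlled through flux across $\Gamma$ --- a point many write-ups gloss over. Your continuous inf-sup argument (Bogovskii operator for the mean-zero part plus an explicit flux-carrying field for the constant) is sound.

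The gap is in your Fortin step, which is the heart of the matter. For plain $\mathbb{P}_2$ velocities there is no element-interior bubble ($\lambda_1\lambda_2\lambda_3$ is cubic), so the only functions available to correct the element means $\int_T\bw$ are edge bubbles \emph{shared between neighboring elements}; demanding $\int_T\bw=\boldsymbol{0}$ for every $T$ is therefore a globally coupled linear system, and its uniform solvability and stability is precisely the hard part --- a uniformly stable Fortin operator for Taylor--Hood was only constructed by Mardal, Sch\"oberl and Winther (Numer.\ Math., 2013) by a far more elaborate argument than a Scott--Zhang interpolant plus local bubble correction. (Also, in 3D the $\mathbb{P}_2$ element has only vertex and edge-midpoint degrees of freedom; the ``face degrees of freedom'' you invoke do not exist.) The classical proofs deliberately avoid exact mean matching: Verf\"urth's trick proves a weak inf-sup against the mesh-weighted pressure-gradient term $\bigl(\sum_{T}h_T^2\|\nabla q_h\|_{0,T}^2\bigr)^{1/2}$ using tangential edge-midpoint values and combines it with $H^1$-stable quasi-interpolation, while Stenberg's macroelement technique --- the alternative you mention in your last sentence --- is indeed a viable complete route. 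Finally, the constant pressure mode does not require annihilating interface moments within a Fortin construction: it suffices to exhibit one fixed $\bv_h^\ast\in\boldsymbol{X}_{f,h}$ with $\int_\Gamma\bv_h^\ast\cdot\bn_f=1$ and $h$-uniformly bounded norm (e.g., the interpolant of a smooth flux-carrying field) and to combine it algebraically with the inf-sup over mean-zero pressures; this is much lighter than the moment-matching you propose. As written, your primary argument stalls at the $\Pi_h^{(2)}$ construction; rerouted through Verf\"urth's trick or the macroelement criterion, the proof goes through.
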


\begin{theorem}{\label{FEM-err-est}}
%Let $\bu \in  \boldsymbol{H}^{3}(\Omega_{f}) \cap \boldsymbol{X}_f,~\varphi\in H^3(\Omega_p)\cap X_p,~p\in H^2(\Omega_f)\cap Q$ be the solution of problem $(\ref{model-pro1})-(\ref{model-pro4})$. Assume the conditions in Theorem \ref{FEM-well-pose} hold,
Let $(\bu, p, \varphi)$ be the solution of problem $(\ref{model-pro1})-(\ref{model-pro4})$ and $(\bu_h, p_h,$ $\varphi_h)$ be the solution of finite element scheme $(\ref{FEM-1})-(\ref{FEM-2})$. Assume that $\bu\in \boldsymbol{H}^{3}(\Omega_{f})$, $p\in H^2(\Omega_f)$ and $\varphi\in H^3(\Omega_p)$. Then the following estimates hold:
\begin{align}\nonumber
2\nu&\|\dd(\bu-\bu_h)\|^2_{0,\Omega_f}+\sum_{j=1}^{d-1}\|(\bu-\bu_h)\cdot\bm{\tau}_j\|^2_{0,\Gamma}+\rho g \|\dk^{\frac12}\nabla(\varphi-\varphi_h)\|^2_{0,\Omega_p}\\
&\leq  Ch^4(\|\bu\|^2_{3,\Omega_f}+\|p\|^2_{2,\Omega_f}+\|\varphi\|^2_{3,\Omega_p}),\\
\|p&-p_h\|^2\leq Ch^4(\|\bu\|^2_{3,\Omega_f}+\|p\|^2_{2,\Omega_f}+\|\varphi\|^2_{3,\Omega_p}).
\end{align}
\end{theorem}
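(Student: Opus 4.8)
The plan is to perform a standard energy-type finite element error analysis, built on three ingredients already available: the coercivity of the linear part of the coupled form, the inf-sup condition \eqref{inf-sup}, and the small-data assumption \eqref{assume-1} which is needed to control the nonlinear convection term. First I would derive the Galerkin orthogonality. Since the exact solution $(\bu,p,\varphi)$ satisfies \eqref{weak-form-1} for all admissible test functions, it satisfies it in particular for $(\bv_h,\psi_h)\in\bs{X}_{f,h}\times X_{p,h}$ and $q_h\in Q_h$; subtracting \eqref{FEM-1}--\eqref{FEM-2} yields an error equation for $(\bu-\bu_h,\,p-p_h,\,\varphi-\varphi_h)$ in which the nonlinear contribution appears as $c(\bu;\bu,\bv_h)-c(\bu_h;\bu_h,\bv_h)$. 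I would then introduce finite element interpolants $\bu_I\in\bs{X}_{f,h}$, $p_I\in Q_h$, $\varphi_I\in X_{p,h}$ and split each error as $\bu-\bu_h=\bta+\btheta$ with $\bta=\bu-\bu_I$ and $\btheta=\bu_I-\bu_h$, and analogously $\eta_p+\theta_p$, $\eta_\varphi+\theta_\varphi$ for the pressure and head. The interpolation parts $\bta$, $\eta_p$, $\eta_\varphi$ are bounded at order $h^2$ by the standard approximation properties of the $\mathbb{P}_2$--$\mathbb{P}_1$ Taylor-Hood pair and the $\mathbb{P}_2$ head space, so it remains to estimate the discrete parts $\btheta$, $\theta_p$, $\theta_\varphi$.

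Next I would test the error equation with the discrete errors, taking $(\bv_h,\psi_h)=(\btheta,\theta_\varphi)$ and $q_h=\theta_p$. The decisive structural facts are that the interface form is antisymmetric and therefore vanishes on the diagonal, $a_\Gamma(\btheta,\theta_\varphi;\btheta,\theta_\varphi)=0$, and that the remaining linear part is coercive, giving $a_f(\btheta,\btheta)+a_p(\theta_\varphi,\theta_\varphi)\ge 2\nu\|\dd(\btheta)\|_{0,\Omega_f}^2+\rho g\alpha_1|\theta_\varphi|_{1,\Omega_p}^2$ plus the nonnegative tangential boundary contribution. The pressure couplings are handled using $b(\bu-\bu_h,q_h)=0$ to convert the $b(\btheta,\cdot)$ terms into interpolation-error terms. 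For the nonlinear term I would use the identity $c(\bu;\bu,\btheta)-c(\bu_h;\bu_h,\btheta)=c(\bu-\bu_h;\bu,\btheta)+c(\bu_h;\bu-\bu_h,\btheta)$, again split $\bu-\bu_h=\bta+\btheta$, and bound every resulting trilinear term by \eqref{tri-est}. The two terms $c(\btheta;\bu,\btheta)$ and $c(\bu_h;\btheta,\btheta)$ are critical: by \eqref{tri-est} together with the a priori bound $\|\dd(\bu_h)\|_{0,\Omega_f}\le \nu/(2\N_d)$ from \eqref{FEM-priori-est2} (and its analogue for $\bu$), each is controlled by $\tfrac{\nu}{2}\|\dd(\btheta)\|_{0,\Omega_f}^2$, so together they are absorbed into $2\nu\|\dd(\btheta)\|_{0,\Omega_f}^2$ with a strictly positive remainder. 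The interpolation-error trilinear terms, together with $\bta$, $\eta_p$, $\eta_\varphi$, are all $O(h^2)$; applying Young's inequality to move them to the right-hand side yields the desired $O(h^4)$ bound for $\|\dd(\btheta)\|_{0,\Omega_f}^2$ and $|\theta_\varphi|_{1,\Omega_p}^2$, hence, by the triangle inequality, for the full velocity and head errors.

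Finally, the pressure error is recovered from the inf-sup condition \eqref{inf-sup}: I would bound $\|\theta_p\|_{0,\Omega_f}$ by taking the supremum over $\bv_h\in\bs{X}_{f,h}$ and estimating the residual $b(\bv_h,\theta_p)$ through the error equation in terms of the already-controlled velocity and head errors and the nonlinear term; since all of these are $O(h^2)$, combining with $\|\eta_p\|_{0,\Omega_f}=O(h^2)$ gives the stated pressure estimate. The main obstacle I anticipate is the bookkeeping around the nonlinear term: one must verify that the combined absorption constant from $c(\btheta;\bu,\btheta)$ and $c(\bu_h;\btheta,\btheta)$ remains strictly below the coercivity constant $2\nu$, which is precisely where the smallness assumption \eqref{assume-1} and the a priori bounds \eqref{FEM-priori-est2} (for both the discrete and the continuous solutions) enter. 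A secondary technical point is treating the interface cross terms and the trilinear terms generated by the interpolation errors; these require the trace and Sobolev inequalities to keep them at order $h^2$ and, while routine, must be tracked carefully to avoid any loss in the convergence order.
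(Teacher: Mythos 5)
Note first that the paper does not actually prove Theorem \ref{FEM-err-est}: it is quoted as a known result, with the reader referred to \cite{xujinchao2009} for the proof. Your sketch is the standard energy argument and is essentially the approach taken in that reference: Galerkin orthogonality (legitimate here since the spaces are conforming), splitting into interpolation and discrete errors, coercivity plus the vanishing of $a_\Gamma$ on the diagonal, the splitting $c(\bu;\bu,\bv_h)-c(\bu_h;\bu_h,\bv_h)=c(\bu-\bu_h;\bu,\bv_h)+c(\bu_h;\bu-\bu_h,\bv_h)$ with absorption of $c(\btheta;\bu,\btheta)$ and $c(\bu_h;\btheta,\btheta)$ via \eqref{tri-est} and the a priori bounds under \eqref{assume-1} (each contributes at most $\tfrac{\nu}{2}\|\boldsymbol{D}(\btheta)\|^2_{0,\Omega_f}$, leaving strictly positive coercivity), and recovery of the pressure from \eqref{inf-sup}. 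One small point to tighten: writing $b(\btheta,\theta_p)=-b(\bta,\theta_p)$ converts the term to an interpolation error only up to the factor $\|\theta_p\|_{0,\Omega_f}$, which is not yet controlled at that stage; you must either first derive the inf-sup bound $\|\theta_p\|_{0,\Omega_f}\le C(\|\boldsymbol{D}(\btheta)\|_{0,\Omega_f}+O(h^2))$ and substitute back before applying Young's inequality, or use a divergence-preserving (Stokes) projection for $\bu_I$ so that $b(\bta,q_h)=0$ for all $q_h\in Q_h$. With that routine fix, your plan delivers the stated $O(h^4)$ bounds and is consistent with the proof in the cited work.
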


\section{Newton's Method for the Navier-Stokes Darcy model}\label{sec:newton}

%Newton iterative method and its theoretical analysis are the basis of our Int-Deep algorithm.
In this section, we give Newton iterative method for solving the Navier-Stokes Darcy model under the finite element discretization %. We give the Newton iteration method for the discrete scheme of  Navier-Stokes Darcy problems
and provide stability and convergence analysis of the iterative method.

According to Newton iterative method for Navier-Stokes Darcy problem in infinite dimension (cf. \cite{Badea-2010-num}), its finite element analogue can be described as follows.  %Newton iterative method with finite element discretization is described as :

Given $\bu^h_0\in X_{f,h}$, for $n\geq 1$, find $\bu^h_n\in \boldsymbol{X}_{f,h}$, $p^h_n\in Q_h$ and $\varphi^h_n\in X_{p,h}$ such that
\begin{align}\nonumber
&a_f(\bu^h_n,\bv_h)+b(\bv_h,p^h_n)+\rho g \langle\varphi^h_n, \bv_h\cdot\bn_f\rangle_{\Gamma}+c(\bu^h_n;\bu^h_{n-1},\bv_h)
+c(\bu^h_{n-1};\bu^h_n,\bv_h)\\\label{New-alg-1}
&=(\bm{f}_f,\bv_h)+c(\bu^h_{n-1};\bu^h_{n-1},\bv_h), \\\label{New-alg-2}
&b(\bu^h_n,q_h)=0, \\\label{New-alg-3}
&a_p(\varphi^h_n,\psi_h)=\rho g \langle \psi_h,\bu^h_n\cdot\bn_f\rangle_{\Gamma}+\rho g(f_p,\psi_h),
\end{align}
for any $\boldsymbol{v}_h\in \bs{X}_{f,h},\;q_h\in Q_h,\;\psi_h\in X_{p,h}$.

To analyze Newton iterative method, we next introduce two auxiliary problems given below.

For any $\zeta\in L^2(\Gamma)$ or $\eta\in L^2(\Omega_p)$, find $\phi_h\in X_{p,h}$ such that
\begin{align}\label{aux-problem-darcy}
a_p(\phi_h,\psi_h)=\rho g \langle \zeta,\psi_h\rangle_{\Gamma}\quad \forall\; \psi_h\in X_{p,h},
\end{align}
or
\begin{align}\label{aux-problem-2}
a_p(\phi_h,\psi_h)=\rho g(\eta,\psi_h)\quad \forall\; \psi_h\in X_{p,h}.
\end{align}

It is easy to prove by the Lax-Milgram lemma that problem $\eqref{aux-problem-darcy}$ or problem \eqref{aux-problem-2} has a unique solution, which induces a linear operator $\phi_h=T_h (\zeta)$ or $\phi_h=T_h (\eta)$. Furthermore, we have the following estimate which is easy to derive but important in our forthcoming analysis.
\begin{align}
\label{aux-est-1}
\|T_h(\eta)\|_{1,\Omega_p}\leq C_r\|\eta\|_*,
\end{align}
with $C_r$ a positive constant independent of $h$.

The discussion on stability and convergence will be based on the following assumptions about the initial function $\bu^h_0\in \boldsymbol{X}_{f,h}$ in Newton iterative method $\eqref{New-alg-1}-\eqref{New-alg-3}$:
%Assume that the initial value $\bu^h_0\in \boldsymbol{X}_{f,h}$ in Newton iterative method $\eqref{New-alg-1}-\eqref{New-alg-1}$ satisfies the following conditions:
\begin{align}\label{ini-con1}
2\nu\|\dd(\bu^h_0)\|_{0,\Omega_f}&\leq  C_1(\|\bm{f}_f\|_*+\|f_p\|_*),\\\label{ini-con2}
\|\dd(\bu_h-\bu^h_0)\|_{0,\Omega_f}&\leq
\frac{C_1}{\nu}\left(\|\bm{f}_f\|_*+\|f_p\|_*\right),
\end{align}	
where
$$
C_1=1+\rho g C_r\tilde{C}_{t}C_{t}C_k.
$$

%The stability and convergence of the iterative method are discussed as follows.
\begin{lemma}\label{Newton-sta-lemma}
Assume the initial conditions \eqref{ini-con1}-\eqref{ini-con2} and  the following stability condition hold:
\begin{align}\label{Newton-assume1}
\|\bm{f}_f\|_{*}+\|f_p\|_{*}\leq \frac{\nu^2}{9C_1\N_d}.
\end{align}
Then $\bu^h_m$ defined by Newton iterative method $\eqref{New-alg-1}-\eqref{New-alg-3}$ satisfies
\begin{align}\label{Newton-sta}
\|\dd(\bu^h_m)\|_{0,\Omega_f}\leq \frac{3C_1}{2\nu}\left(\|\bm{f}_f\|_*+\|f_p\|_*\right),
\end{align}
for all $m\geq 0$.
\end{lemma}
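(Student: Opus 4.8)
The plan is to prove \eqref{Newton-sta} by induction on $m$. The base case $m=0$ follows at once from the initial condition \eqref{ini-con1}, which gives $\|\dd(\bu^h_0)\|_{0,\Omega_f}\leq\frac{C_1}{2\nu}(\|\bm{f}_f\|_*+\|f_p\|_*)\leq\frac{3C_1}{2\nu}(\|\bm{f}_f\|_*+\|f_p\|_*)$. Assuming the bound holds at index $n-1$, I would prove it at index $n$ by testing the momentum equation \eqref{New-alg-1} with $\bv_h=\bu^h_n$ and using \eqref{New-alg-2} with $q_h=p^h_n$ to eliminate the pressure term, since $b(\bu^h_n,p^h_n)=0$.

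The heart of the argument is the interface coupling term $\rho g\langle\varphi^h_n,\bu^h_n\cdot\bn_f\rangle_\Gamma$, which is exactly the critical term advertised in the introduction. By linearity of \eqref{New-alg-3} I would decompose $\varphi^h_n=T_h(\bu^h_n\cdot\bn_f)+T_h(f_p)=:\phi_h+\chi_h$ through the two auxiliary problems \eqref{aux-problem-darcy}--\eqref{aux-problem-2}. Choosing $\psi_h=\phi_h$ in \eqref{aux-problem-darcy} with $\zeta=\bu^h_n\cdot\bn_f$ shows that $\rho g\langle\phi_h,\bu^h_n\cdot\bn_f\rangle_\Gamma=a_p(\phi_h,\phi_h)\geq 0$, which is the positive term; the complementary piece is the easily estimated one, since combining the bound \eqref{aux-est-1} for $\chi_h$, the trace inequality on $\Omega_p$, the trace inequality on $\Omega_f$ and Korn's inequality yields $\rho g\langle\chi_h,\bu^h_n\cdot\bn_f\rangle_\Gamma\leq\rho g C_r\tilde{C}_t C_t C_k\|f_p\|_*\|\dd(\bu^h_n)\|_{0,\Omega_f}=(C_1-1)\|f_p\|_*\|\dd(\bu^h_n)\|_{0,\Omega_f}$. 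This is precisely where the constant $C_1=1+\rho g C_r\tilde{C}_t C_t C_k$ originates.

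With this decomposition, I would keep $a_p(\phi_h,\phi_h)\geq 0$ on the left, use $a_f(\bu^h_n,\bu^h_n)\geq 2\nu\|\dd(\bu^h_n)\|_{0,\Omega_f}^2$, and bound the three convective terms $c(\bu^h_{n-1};\bu^h_{n-1},\bu^h_n)$, $c(\bu^h_n;\bu^h_{n-1},\bu^h_n)$ and $c(\bu^h_{n-1};\bu^h_n,\bu^h_n)$ directly by the trilinear estimate \eqref{tri-est}; no integration by parts or skew-symmetry is needed, which conveniently avoids the fact that Taylor--Hood velocities are only discretely divergence free. Writing $X:=\|\dd(\bu^h_n)\|_{0,\Omega_f}$, $Y:=\|\dd(\bu^h_{n-1})\|_{0,\Omega_f}$ and $M:=\|\bm{f}_f\|_*+\|f_p\|_*$, the data terms $(\bm{f}_f,\bu^h_n)\leq\|\bm{f}_f\|_* X$ and the interface remainder $(C_1-1)\|f_p\|_* X$ are together bounded by $C_1 M X$, so after dividing by $X$ I would arrive at the scalar inequality $2\nu X\leq C_1 M+\N_d Y^2+2\N_d XY$.

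To close the induction I would invoke the inductive hypothesis $Y\leq\frac{3C_1}{2\nu}M$ together with the smallness condition \eqref{Newton-assume1}, which combine to give $\N_d Y\leq\frac{\nu}{6}$, hence $\N_d Y^2\leq\frac{C_1}{4}M$ and $2\N_d XY\leq\frac{\nu}{3}X$. Substituting and absorbing $\frac{\nu}{3}X$ on the left leaves $\frac{5\nu}{3}X\leq\frac{5C_1}{4}M$, so $X\leq\frac{3C_1}{4\nu}M\leq\frac{3C_1}{2\nu}M$, which propagates \eqref{Newton-sta} to index $n$ and completes the induction (note that only \eqref{ini-con1} is needed here). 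I expect the only genuinely delicate step to be the interface decomposition of the second paragraph: once the positive contribution $a_p(\phi_h,\phi_h)$ has been isolated and the remaining interface term matched to $C_1$ via \eqref{aux-est-1} and the trace and Korn inequalities, everything else reduces to routine applications of \eqref{tri-est} and the smallness assumption.
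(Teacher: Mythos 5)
Your proposal is correct and takes essentially the same route as the paper's proof: the same induction on $m$, the same decomposition $\varphi^h_n=T_h(\bu^h_n\cdot\bn_f)+T_h(f_p)$ with the positive part $a_p(\phi_h,\phi_h)\geq 0$ discarded and the remainder bounded via \eqref{aux-est-1} together with the trace and Korn inequalities (which is indeed where $C_1$ originates), followed by the same trilinear bounds and the smallness condition \eqref{Newton-assume1}, using only \eqref{ini-con1} just as the paper does. Your closing arithmetic, from $2\nu X\leq \frac{5}{4}C_1M+\frac{\nu}{3}X$, actually yields the slightly sharper bound $X\leq\frac{3C_1}{4\nu}M$ (the paper's final display absorbs the $\frac{\nu}{3}X$ term less tightly and lands exactly on $\frac{3C_1}{2\nu}M$), so your version establishes \eqref{Newton-sta} with room to spare.
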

\begin{proof}
We prove $\eqref{Newton-sta}$ by mathematical induction. The initial condition $\eqref{ini-con1}$ implies that $(\ref{Newton-sta})$ holds for $m=0$. Assuming that $(\ref{Newton-sta})$ holds for $m=J$ and we aim to prove the estimate holds for $m+1$. %$(\ref{Newton-sta})$ holds for $m=J+1$.
Taking $\zeta=\bu^h_{J+1}\cdot\bn_f$ in $\eqref{aux-problem-darcy}$ gives $\phi^b_{J+1}=T_h(\bu^h_{J+1}\cdot\bn_f)$. Taking $\eta=f_p$ in $\eqref{aux-problem-2}$ gives $\phi^i$, which satisfies $\|\phi^{i}\|_{1,\Omega_{p}}\leq C_r\|f_p\|_*$ from $\eqref{aux-est-1}$. According to the superposition principle, $\varphi^h_{J+1}=\phi^b_{J+1}+\phi^i$ solves $(\ref{New-alg-3})$, which implies that %Therefore, we conclude that %and using Lemma $\ref{aux-problem-1}$, we have
\begin{align}\nonumber
\rho g \langle \varphi^h_{J+1},\bu^h_{J+1}\cdot\bn_f\rangle_{\Gamma}&=\rho g\langle \phi^b_{J+1}, \bu^h_{J+1}\cdot\bn_f\rangle_{\Gamma}+\rho g\langle\phi^i, \bu^h_{J+1}\cdot\bn_f\rangle_{\Gamma}\\\nonumber
&= a_p(T_h(\bu^n_{J+1}\cdot\bn_f),\phi^b_{J+1})+\rho g\langle \phi^i, \bu^h_{J+1}\cdot\bn_f\rangle_{\Gamma}\\\nonumber
&= a_p(\phi^b_{J+1},\phi^b_{J+1})+\rho g\langle \phi^i, \bu^h_{J+1}\cdot\bn_f\rangle_{\Gamma}\\\label{inter-est}
&\geq \rho g\langle \phi^i, \bu^h_{J+1}\cdot\bn_f\rangle_{\Gamma}.
\end{align}
Taking $(\bv_h,q_h)=(\bu^{J+1}_h,p^{J+1}_h)\in \boldsymbol{X}_{f,h}\times Q_h$ in \eqref{New-alg-1} with $n=J+1$ and using Korn's inequality, the estimates $\eqref{tri-est}$, $\eqref{inter-est}$ and the induction assumption, we have
\begin{align*}
2\nu &\|\dd(\bu^h_{J+1})\|^2_{0,\Omega_f}\\
\leq &(\bm{f}_f,\bu^h_{J+1})-\rho g \langle \varphi^h_{J+1},\bu^h_{J+1}\cdot\bn_f\rangle_{\Gamma}\\
&+c(\bu^h_J;\bu^h_J,\bu^h_{J+1})-c(\bu^h_{J+1};\bu^h_{J},\bu^h_{J+1})-c(\bu^h_J;\bu^h_{J+1},\bu^h_{J+1})\\
\leq & \|\bm{f}_f\|_*\|\dd(\bu^h_{J+1})\|_{0,\Omega_f}-\rho g\langle\phi^i,\bu^h_{J+1}\cdot\bn_f\rangle_{\Gamma}\\
&+\N_d\|\dd(\bu^h_J)\|^2_{0,\Omega_f}\|\dd(\bu^h_{J+1})\|_{0,\Omega_f}
+2\N_d\|\dd(\bu^h_{J})\|_{0,\Omega_f}\|\dd(\bu^h_{J+1})\|^2_{0,\Omega_f}\\
\leq & C_1(\|\bm{f}_f\|_*+\|f_p\|_*)\|\dd(\bu^h_{J+1})\|_{0,\Omega_f}
+\frac{9\N_d}{4\nu^2}C_1^2(\|\bm{f}_f\|_*+\|f_p\|_*)^2\|\dd(\bu^h_{J+1})\|_{0,\Omega_f}\\
&+\frac{3\N_d}{\nu}C_1(\|\bm{f}_f\|_*+\|f_p\|_*)\|\dd(\bu^h_{J+1})\|^2_{0,\Omega_f}.
\end{align*}
This, in conjunction with $(\ref{Newton-assume1})$, gives rise to
\begin{align*}
(\nu-\frac{\nu}{6})\|\dd(\bu^h_{J+1})\|_{0,\Omega_f}\leq \frac{5}{4}C_1(\|\bm{f}_f\|_*+\|f_p\|_*),
\end{align*}
which implies $(\ref{Newton-sta})$.
%We complete the proof.
\end{proof}

\begin{lemma}
The function $\bu_h$ defined by the finite element method $(\ref{FEM-1})-(\ref{FEM-2})$ satisfies
\begin{align}\label{FEM-solu-est}
\nu \|\dd(\bu_h)\|_{0,\Omega_f}\leq C_1 (\|\bm{f}_f\|_*+\|f_p\|_*).
\end{align}
\end{lemma}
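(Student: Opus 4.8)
The plan is to run the standard energy argument directly on the finite element solution, exactly as in the proof of Lemma \ref{Newton-sta-lemma}, and then to dispose of the nonlinear term by invoking the a priori bound \eqref{FEM-priori-est2}. First I would take the test functions $(\bv_h,q_h,\psi_h)=(\bu_h,p_h,\varphi_h)$ in \eqref{FEM-1}-\eqref{FEM-2}. By the definition of $a_\Gamma$ the interface coupling $a_\Gamma(\bu_h,\varphi_h;\bu_h,\varphi_h)=\rho g\int_\Gamma(\varphi_h\bu_h-\varphi_h\bu_h)\cdot\bn_f$ vanishes, and $b(\bu_h,p_h)=0$ by \eqref{FEM-2}. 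This leaves the identity
\[
a_f(\bu_h,\bu_h)+a_p(\varphi_h,\varphi_h)+c(\bu_h;\bu_h,\bu_h)=(\bm{f}_f,\bu_h)+\rho g(f_p,\varphi_h).
\]

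Next I would remove the Darcy data term. Taking the velocity test function to be $\boldsymbol 0$ and $\psi_h=\varphi_h$ in \eqref{FEM-1} gives the discrete Darcy relation $\rho g(f_p,\varphi_h)=a_p(\varphi_h,\varphi_h)-\rho g\langle\varphi_h,\bu_h\cdot\bn_f\rangle_\Gamma$. Substituting this back cancels the two $a_p(\varphi_h,\varphi_h)$ terms and yields
\[
a_f(\bu_h,\bu_h)+c(\bu_h;\bu_h,\bu_h)=(\bm{f}_f,\bu_h)-\rho g\langle\varphi_h,\bu_h\cdot\bn_f\rangle_\Gamma,
\]
so that everything reduces to controlling the interface term on the right and the convective term on the left. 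For the interface term I would reuse the key decomposition behind \eqref{inter-est}: write $\varphi_h=\phi^b+\phi^i$ with $\phi^b=T_h(\bu_h\cdot\bn_f)$ from \eqref{aux-problem-darcy} and $\phi^i=T_h(f_p)$ from \eqref{aux-problem-2}. Then $\rho g\langle\phi^b,\bu_h\cdot\bn_f\rangle_\Gamma=a_p(\phi^b,\phi^b)\ge0$, so this positive part may be discarded with a favorable sign, while the remaining piece is estimated by Cauchy-Schwarz on $\Gamma$, the two trace inequalities, Korn's inequality and \eqref{aux-est-1} to give $\rho g|\langle\phi^i,\bu_h\cdot\bn_f\rangle_\Gamma|\le\rho g\,C_r\tilde C_t C_t C_k\|f_p\|_*\|\dd(\bu_h)\|_{0,\Omega_f}$. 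Combining this with $(\bm{f}_f,\bu_h)\le\|\bm{f}_f\|_*\|\dd(\bu_h)\|_{0,\Omega_f}$, with $a_f(\bu_h,\bu_h)\ge2\nu\|\dd(\bu_h)\|_{0,\Omega_f}^2$, and with $C_1=1+\rho g C_r\tilde C_t C_t C_k$, the right-hand side is bounded by $C_1(\|\bm{f}_f\|_*+\|f_p\|_*)\|\dd(\bu_h)\|_{0,\Omega_f}$.

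The main obstacle is the convective term $c(\bu_h;\bu_h,\bu_h)$, which, unlike in the continuous divergence-free setting, need not vanish for a merely discretely divergence-free Taylor-Hood velocity. I would not attempt to make it vanish; instead I would bound it through \eqref{tri-est} as $|c(\bu_h;\bu_h,\bu_h)|\le\N_d\|\dd(\bu_h)\|_{0,\Omega_f}^3$ and absorb it using the a priori estimate \eqref{FEM-priori-est2}, namely $\N_d\|\dd(\bu_h)\|_{0,\Omega_f}\le\nu/2$. This turns the cubic quantity into at most $\frac{\nu}{2}\|\dd(\bu_h)\|_{0,\Omega_f}^2$, so that the left-hand side is at least $\frac{3\nu}{2}\|\dd(\bu_h)\|_{0,\Omega_f}^2$.

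Putting the two sides together gives $\frac{3\nu}{2}\|\dd(\bu_h)\|_{0,\Omega_f}^2\le C_1(\|\bm{f}_f\|_*+\|f_p\|_*)\|\dd(\bu_h)\|_{0,\Omega_f}$, and dividing by $\|\dd(\bu_h)\|_{0,\Omega_f}$ yields $\frac{3\nu}{2}\|\dd(\bu_h)\|_{0,\Omega_f}\le C_1(\|\bm{f}_f\|_*+\|f_p\|_*)$, which is in fact slightly sharper than the claimed \eqref{FEM-solu-est}. The only delicate point, besides the interface decomposition, is making sure the stability threshold \eqref{assume-1} of Theorem \ref{FEM-well-pose} is in force so that \eqref{FEM-priori-est2} is legitimately available; under that standing assumption the estimate follows.
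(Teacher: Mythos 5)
Your proposal is correct and takes essentially the same approach as the paper's proof: test with the discrete velocity, decompose $\varphi_h=\phi^b_h+\phi^i_h$ via the auxiliary problems \eqref{aux-problem-darcy}--\eqref{aux-problem-2} so that the positive part $a_p(\phi^b_h,\phi^b_h)$ can be discarded and the remainder bounded through \eqref{aux-est-1} with the trace and Korn inequalities, and absorb $c(\bu_h;\bu_h,\bu_h)$ using \eqref{FEM-priori-est2}. Your only (cosmetic) deviation is testing with $\psi_h=\varphi_h$ and then cancelling $a_p(\varphi_h,\varphi_h)$ via the discrete Darcy relation, where the paper simply takes $\psi_h=0$ directly; both yield the same identity, and your observation that the argument actually gives the slightly sharper constant $\tfrac{3\nu}{2}$ in place of $\nu$ matches what the paper's own computation produces.
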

\begin{proof}
Taking $\bv_h=\bu_h,~\psi_h=0$ in $(\ref{FEM-1})$ and noting that%and together with $(\ref{FEM-2})$, we have
\begin{align*}
b(\bu_h,p_h)=0,
\end{align*}
we get
\begin{align}
2\nu\|\dd(\bu_h)\|^2_{0,\Omega_f}=(\bm{f}_f,\bu_h)-c(\bu_h;\bu_h,\bu_h)-\langle \varphi_h, \bu_h\cdot\bn_f\rangle_{\Gamma}.
\end{align}
In view of the estimates $(\ref{tri-est})$ and $(\ref{FEM-priori-est2})$, the trilinear from in the above equation can be bounded as
\begin{align}\label{FEM-solu-est-1}
c(\bu_h;\bu_h,\bu_h)\leq \N_d\|\dd(\bu_h)\|^3_{0,\Omega_f}\leq \frac{\nu}{2}\|\dd(\bu_h)\|^2_{0,\Omega_f}.
\end{align}
In order to handle the interface term $\langle \varphi_h, \bu_h\cdot\bn_f\rangle_{\Gamma}$, taking $\bv_h=0$ in $(\ref{FEM-1})$, we get
\begin{align}\label{FEM-darcy}
a_p(\varphi_h,\psi_h)=\rho g(f_p,\psi_h)+\langle\bu_h\cdot\bn_f,\psi_h\rangle_{\Gamma}.
\end{align}
Applying a similar argument used for proving Lemma $\ref{Newton-sta-lemma}$, we decompose $\varphi_h$ as $\varphi_h=\phi^b_h+\phi^i_h$, where $\phi^b_h$ is the solution of $(\ref{aux-problem-darcy})$ with $\zeta=\bu_h\cdot\bn_f$, and $\phi^i_h$ is the solution of $(\ref{aux-problem-2})$ with $\eta=f_p$. Then, for the interface term, we have
\begin{align}\nonumber
\rho g \langle \varphi_h, \bu_h\cdot\bn_f\rangle_{\Gamma}
&=\rho g\langle \phi^b_h, \bu_h\cdot\bn_f\rangle_{\Gamma}+\rho g\langle \phi^i_h,\bu_h\cdot\bn_f\rangle_{\Gamma}\\\nonumber
&=a_p(\phi^b_h,\phi^b_h)+\rho g\langle \phi^i_h,\bu_h\cdot\bn_f\rangle_{\Gamma}\\\label{FEM-solu-est-2}
&\geq\rho g\langle \phi^i_h,\bu_h\cdot\bn_f\rangle_{\Gamma}.
\end{align}
The combination of $(\ref{FEM-solu-est-1})-(\ref{FEM-solu-est-2})$ implies the estimate $(\ref{FEM-solu-est})$.
\end{proof}

Finally, we derive error estimates of Newton iterative method. Let $(\bu_h,\varphi_h,p_h)$ be the solution of the scheme $(\ref{FEM-1})-(\ref{FEM-2})$ and $(\bu^h_n,p^h_n,\varphi^h_n)$ be the solution of Newton iterative method $(\ref{New-alg-1})-(\ref{New-alg-3})$. Set $\be^u_{n}=\bu_h-\bu^h_n,~e^{p}_n=p_h-p^h_n$ and $e^{\varphi}_n=\varphi_h-\varphi^h_n$.
 \begin{theorem}
 \label{convergence-rate}
 Under the assumptions of Lemma $\ref{Newton-sta-lemma}$, the following error estimates hold true.
 \begin{align}\label{Newton-err-est-1}
 \|\dd(\be^{u}_n)\|_{0,\Omega_f}&\leq \frac{C_1}{\nu}\left(\frac{3C_1\N_d}{2\nu^2}(\|\bm{f}_f\|_*+\|f_p\|_*)\right)^{2^n-1}(\|\bm{f}_f\|_*+\|f_p\|_*),\\\label{Newton-err-est-2}
|\nabla e^{\varphi}_n|_{1,\Omega_p}
&\leq  \frac{\tilde{C}_tC_tC_kC_1}{ \alpha_1\nu}
\left(\frac{3C_1\N_d}{2\nu^2}(\|\bm{f}_f\|_*+\|f_p\|_*)\right)^{2^n-1}(\|\bm{f}_f\|_*+\|f_p\|_*),\\\label{Newton-err-est-3}
\|e^p_n\|_{0,\Omega_f}
&\leq
(\frac{10}{3}+\frac{\tilde{C^2}_tC^2_tC^2_k}{\alpha_1\nu})\frac{C_1}{\beta}\\\nonumber
&~~~\left(\frac{3C_1\N_d}{2\nu^2}(\|\bm{f}_f\|_*+\|f_p\|_*)\right)^{2^{n}-1}(\|\bm{f}_f\|_*+\|f_p\|_*),
 \end{align}
 for all $n\geq 1$.
 \end{theorem}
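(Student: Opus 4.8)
The plan is to compare the Newton iterate with the finite element solution equation by equation, reduce the whole analysis to a scalar quadratic recursion for $\|\dd(\be^u_n)\|_{0,\Omega_f}$, and then post-process to recover the head and pressure bounds. First I would set up the three error equations. Subtracting \eqref{New-alg-2} from \eqref{FEM-2} gives $b(\be^u_n,q_h)=0$ for all $q_h\in Q_h$, so in particular $b(\be^u_n,e^p_n)=0$. Subtracting \eqref{New-alg-3} from the $\bv_h=\bm{0}$ instance of \eqref{FEM-1} yields $a_p(e^\varphi_n,\psi_h)=\rho g\langle\psi_h,\be^u_n\cdot\bn_f\rangle_{\Gamma}$, so that $e^\varphi_n=T_h(\be^u_n\cdot\bn_f)$ with $T_h$ the operator of \eqref{aux-problem-darcy}; crucially the source $f_p$ cancels in the error, so $e^\varphi_n$ is the pure interface lifting. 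For momentum I would subtract \eqref{New-alg-1} from the $\psi_h=0$ instance of \eqref{FEM-1}; the only nontrivial piece is the convective residual $c(\bu_h;\bu_h,\bv_h)-c(\bu^h_n;\bu^h_{n-1},\bv_h)-c(\bu^h_{n-1};\bu^h_n,\bv_h)+c(\bu^h_{n-1};\bu^h_{n-1},\bv_h)$, which, since $c$ is trilinear and this is exactly the quadratic Taylor remainder of the Newton linearization, collapses upon writing $\bu_h=\bu^h_{n-1}+\be^u_{n-1}$ to $c(\be^u_n;\bu^h_{n-1},\bv_h)+c(\bu^h_{n-1};\be^u_n,\bv_h)+c(\be^u_{n-1};\be^u_{n-1},\bv_h)$. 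The momentum error equation then reads $a_f(\be^u_n,\bv_h)+b(\bv_h,e^p_n)+\rho g\langle e^\varphi_n,\bv_h\cdot\bn_f\rangle_{\Gamma}+c(\be^u_n;\bu^h_{n-1},\bv_h)+c(\bu^h_{n-1};\be^u_n,\bv_h)=-c(\be^u_{n-1};\be^u_{n-1},\bv_h)$.

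Next I would derive the velocity recursion by taking $\bv_h=\be^u_n$. This kills $b(\be^u_n,e^p_n)$, and the coercive part of $a_f$ gives $2\nu\|\dd(\be^u_n)\|^2_{0,\Omega_f}$. The decisive step is the interface term: because $e^\varphi_n=T_h(\be^u_n\cdot\bn_f)$, the definition \eqref{aux-problem-darcy} gives $\rho g\langle e^\varphi_n,\be^u_n\cdot\bn_f\rangle_{\Gamma}=a_p(e^\varphi_n,e^\varphi_n)\ge 0$, so this coupling term is nonnegative and may be dropped --- this is the ``positive term'' decomposition announced in the introduction, made entirely clean here since $f_p$ has cancelled. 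The two linear convective terms are bounded by \eqref{tri-est} and absorbed using Lemma \ref{Newton-sta-lemma} together with \eqref{Newton-assume1}, which yield $2\N_d\|\dd(\bu^h_{n-1})\|_{0,\Omega_f}\le\nu/3$. What survives is $\frac{5\nu}{3}\|\dd(\be^u_n)\|_{0,\Omega_f}\le\N_d\|\dd(\be^u_{n-1})\|^2_{0,\Omega_f}$, i.e. the quadratic recursion $\|\dd(\be^u_n)\|_{0,\Omega_f}\le\frac{3\N_d}{5\nu}\|\dd(\be^u_{n-1})\|^2_{0,\Omega_f}$. A straightforward induction, with base case supplied by \eqref{ini-con2}, then gives \eqref{Newton-err-est-1}: the amplification factor $\frac{3C_1\N_d}{2\nu^2}(\|\bm{f}_f\|_*+\|f_p\|_*)$ is squared at each step, producing the exponent $2^n-1$, and the induction closes because $\frac{3\N_d}{5\nu}\cdot\frac{C_1}{\nu}\le\frac{3C_1\N_d}{2\nu^2}$.

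Finally I would post-process. The head bound \eqref{Newton-err-est-2} follows by testing the Darcy error equation with $\psi_h=e^\varphi_n$, using positive definiteness of $\bs{K}$ to get $\rho g\alpha_1|e^\varphi_n|^2_{1,\Omega_p}\le\rho g\langle e^\varphi_n,\be^u_n\cdot\bn_f\rangle_{\Gamma}$ and bounding the right side by the trace and Korn inequalities, which gives $|e^\varphi_n|_{1,\Omega_p}\le\frac{\tilde C_t C_t C_k}{\alpha_1}\|\dd(\be^u_n)\|_{0,\Omega_f}$; inserting \eqref{Newton-err-est-1} yields the claim. For the pressure \eqref{Newton-err-est-3} I would invoke the inf-sup condition \eqref{inf-sup}: solving the momentum error equation for $b(\bv_h,e^p_n)$, dividing by $\|\dd(\bv_h)\|_{0,\Omega_f}$, and bounding the $a_f$ term, the interface term (via the head bound just obtained), the two linear convective terms (via the stability bound), and the quadratic term. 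Once the quadratic term is re-expressed through the recursion as a multiple of $\nu$ times the velocity bound, every contribution carries a factor $\nu$ that cancels the $1/\nu$ in \eqref{Newton-err-est-1}, producing the stated constant $(\frac{10}{3}+\frac{\tilde C_t^2 C_t^2 C_k^2}{\alpha_1\nu})\frac{C_1}{\beta}$.

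I expect the main obstacle to be the coupling term, not the algebra. The Newton residual identity and the two auxiliary estimates are essentially bookkeeping, but the term $\rho g\langle e^\varphi_n,\be^u_n\cdot\bn_f\rangle_{\Gamma}$ is a priori of indefinite sign and would destroy the coercivity step; the whole argument hinges on recognizing that the error head is exactly the Darcy lifting $T_h(\be^u_n\cdot\bn_f)$ of its own interface flux, so that this term equals the nonnegative energy $a_p(e^\varphi_n,e^\varphi_n)$. This is the step I would set up most carefully. A secondary point requiring attention is checking that the thresholds in \eqref{Newton-assume1} and \eqref{ini-con2} are precisely strong enough for the induction to close at the advertised quadratic rate, uniformly in $h$.
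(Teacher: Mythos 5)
Your proposal is correct and follows essentially the same route as the paper: the same error equations, the same key sign observation that $\rho g\langle e^{\varphi}_n,\be^u_n\cdot\bn_f\rangle_{\Gamma}=a_p(e^{\varphi}_n,e^{\varphi}_n)\geq 0$ (which the paper phrases via the lifting operator $T_h$ in Lemma \ref{Newton-sta-lemma} and reuses here), the same quadratic recursion for $\|\dd(\be^u_n)\|_{0,\Omega_f}$ closed by induction from \eqref{ini-con2}, and the same post-processing of $e^{\varphi}_n$ and $e^p_n$ via the Darcy test function and the inf-sup condition \eqref{inf-sup}. Your only deviations are cosmetic: your recursion constant $\frac{3\N_d}{5\nu}$ is slightly sharper than the paper's $\frac{3\N_d}{2\nu}$ in \eqref{Newton-iter-est} (both close the induction), and you correctly base the induction on \eqref{ini-con2}, where the paper's citation of \eqref{ini-con1} appears to be a typo.
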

 \begin{proof}
%Subtracting $(\ref{New-alg-1})$ and $(\ref{New-alg-3})$ from $(\ref{FEM-1})$ with $\psi_h=0$, for $n\geq 1$, we have
%%Taking $\psi_h=0$ in $(\ref{FEM-1})$ and then subtracting $(\ref{New-alg-1})$ into $(\ref{FEM-1})$, for $n\geq 1$, we have
%\begin{eqnarray}\label{err-equ-1}
% &&a_f(\be^{u}_n,\bv_h)+b(\bv_h,e^p_n)+\rho g \langle e^{\varphi}_n,\bv_h\cdot\bn_f\rangle_{\Gamma}
%+c(\be^u_n;\bu^h_{n-1},\bv_h)\\\nonumber
% &&+c(\bu^h_{n-1};\be^u_h,\bv_h)+c(\be^u_{n-1};\be^u_{n-1},\bv_h)
% =0.
% \end{eqnarray}
%According to $(\ref{New-alg-2})$ from $(\ref{FEM-2})$, we get the following error equation.
% \begin{eqnarray}\label{err-equ-2}
%b(\be^u_n,q_h)=0 \qquad \forall~ n\geq 1.
% \end{eqnarray}
% \\
% Taking $\bv_h=\mathbf{0}$ in $(\ref{FEM-1})$ and subtracting $(\ref{New-alg-3})$ into $(\ref{FEM-1})$, we know that
% \begin{eqnarray}\label{err-equ-3}
% a_p(e^{\varphi}_n,\psi_h)-\rho g \langle \psi_h,\be^{u}_n\cdot\bn_f\rangle_{\Gamma}=0,
% \end{eqnarray}
% for $n\geq 1$.
Subtracting $(\ref{New-alg-1})-(\ref{New-alg-3})$ from the finite element scheme $(\ref{FEM-1})-(\ref{FEM-2})$, we find the error equations for  Newton iterative method are determined by
\begin{align}\nonumber
&a_f(\be^{u}_n,\bv_h)+a_p(e^{\varphi}_n,\psi_h)+b(\bv_h,e^p_n)+a_{\Gamma}(\be^u_n,\psi_h;\bv_h, e^{\varphi}_n)+c(\be^u_n;\bu^h_{n-1},\bv_h)\\\label{err-equ-1}%-\rho g \langle \psi_h,\be^{u}_n\cdot\bn_f\rangle_{\Gamma}+\rho g \langle e^{\varphi}_n,\bv_h\cdot\bn_f\rangle_{\Gamma}
&+c(\bu^h_{n-1};\be^u_h,\bv_h)+c(\be^u_{n-1};\be^u_{n-1},\bv_h)
 =0~~~~\forall~ \boldsymbol{v}_h\in \bs{X}_{f,h}, \psi_h \in X_{p,h},\\\label{err-equ-3}
&b(\be^u_n,q_h)=0~~~~\forall~q_h\in Q_h.
\end{align}
%for$n\geq 1$.
Taking $\bv_h=\be^u_n,~\psi_h=0$ in $(\ref{err-equ-1})$ and noting that $b(\be^u_n,e^p_n)=0$, we achieve
\begin{align*}
&2\nu \|\dd(\be^u_n)\|^2_{0,\Omega_f}\\
&=-\rho g\langle e^{\varphi}_n,\be^u_n\cdot\bn_f\rangle_{\Gamma}-c(\be^u_{n-1};\be^u_{n-1},\be^u_n)
-c(\be^u_n;\bu^h_{n-1},\be^u_n)-c(\bu^h_{n-1};\be^u_{n},\be^u_n)\\
&\leq \N_d\|\dd(\be^u_{n-1})\|^2_{0,\Omega_f}\|\dd(\be^u_n)\|_{0,\Omega_f}
+2\N_d\|\dd(\bu^h_{n-1})\|_{0,\Omega_f}\|\dd(\be^u_n)\|^2_{0,\Omega_f},
 \end{align*}
and with $(\ref{Newton-sta})$ and $(\ref{Newton-assume1})$, we further have
 \begin{align}\label{Newton-iter-est}
\|\dd(\be^u_n)\|_{0,\Omega_f}\leq \frac{3\N_d}{2\nu}\|\dd(\be^u_{n-1})\|^2_{0,\Omega_f},
 \end{align}
 for $n\geq 1$.

Now, we prove $(\ref{Newton-err-est-1})$ by mathematical induction. It follows from the initial condition $\eqref{ini-con1}$ that (\ref{Newton-err-est-1}) holds for $n=0$. Assuming that $(\ref{Newton-err-est-1})$ holds for $n=J$, we are going to prove that the estimate still holds for $n=J+1$. Applying the estimate $(\ref{Newton-iter-est})$ and the induction assumption gives
\begin{align*}
\|\dd(\be^u_{J+1})\|_{0,\Omega_f}&\leq \frac{3\N_d}{2\nu}\|\dd(\be^u_J)\|^2_{0,\Omega_f}\\
&\leq \frac{3\N_d}{2\nu}\cdot\frac{C^2_1}{\nu^2}\left(\frac{3C_1\N_d}{2\nu^2}(\|\bm{f}_f\|_*+\|f_p\|_*)\right)^{2^{J+1}-2}(\|\bm{f}_f\|_*+\|f_p\|_*)^2\\
&=\frac{C_1}{\nu}\left(\frac{3C_1\N_d}{2\nu^2}(\|\bm{f}_f\|_*+\|f_p\|_*)\right)^{2^{J+1}-1}(\|\bm{f}_f\|_*+\|f_p\|_*),
\end{align*}
which is $(\ref{Newton-err-est-1})$ with $n=J+1$. So we prove $(\ref{Newton-err-est-1})$ holds.
%So we proved $(\ref{Newton-err-est-1})$ by the induction principle.

Let us consider the boundedness of $|e^{\varphi}_n|_1$. Taking $\bv_h=\mathbf{0},~\psi_h=e^{\varphi}_n$ in $(\ref{err-equ-3})$ and applying $(\ref{Newton-err-est-1})$ gives
\begin{align*}
\alpha_1|\nabla e^{\varphi}_n|^2_{1,\Omega_p}&\leq \|e^{\varphi}_n\|_{0,\Gamma}\|\be^u_n\|_{0,\Gamma}\\
&\leq  \tilde{C}_t|\nabla e^{\varphi}_n|_{1,\Omega_p}C_tC_k\|\dd(\be^u_n)\|_{0,\Omega_f}\\
&\leq  \frac{\tilde{C}_tC_tC_kC_1}{\nu}|\nabla e^{\varphi}_n|_{1,\Omega_p}
\left(\frac{3C_1\N_d}{2\nu^2}(\|\bm{f}_f\|_*+\|f_p\|_*)\right)^{2^n-1}(\|\bm{f}_f\|_*+\|f_p\|_*).
\end{align*}

Finally, we estimate $e^p_n$. By using the discrete inf-sup condition $\eqref{inf-sup}$, and the estimates $\eqref{Newton-sta}$, $(\ref{Newton-assume1})$, $\eqref{Newton-err-est-1}$ and $\eqref{Newton-err-est-2}$, we obtain{\color{black}
\begin{align*}
\beta\|e^p_n\|_{0,\Omega_f}
\leq& 2\nu\|\dd(\be^u_n)\|_{0,\Omega_f}+\rho g C_t\tilde{C}_{t}C_k|e^{\varphi}_n|_{1,\Omega_p}\\
&+2\N_d\|\dd(\bu^h_{n-1})\|_{0,\Omega_f}\|\dd(\be^u_n)\|_{0,\Omega_f}+\N_d\|\dd(\be^u_{n-1})\|^2_{0,\Omega_f}\\
\leq&(2\nu+\frac{\nu}{3})\|\dd(\be^u_n)\|_{0,\Omega_f}+\rho g C_t\tilde{C}_{t}C_k|e^{\varphi}_n|_{1,\Omega_p}
+\frac32\N_d\|\dd(\be^u_{n-1})\|^2_{0,\Omega_f}\\
\leq&\frac{7C_1}{3}\left(\frac{3C_1\N_d}{2\nu^2}(\|\bm{f}_f\|_*+\|f_p\|_*)\right)^{2^n-1}(\|\bm{f}_f\|_*+\|f_p\|_*)\\
&+\frac{\tilde{C}^2_tC^2_tC^2_kC_1}{ \alpha_1\nu}
\left(\frac{3C_1\N_d}{2\nu^2}(\|\bm{f}_f\|_*+\|f_p\|_*)\right)^{2^n-1}(\|\bm{f}_f\|_*+\|f_p\|_*)\\
&+{C_1}\left(\frac{3C_1\N_d}{2\nu^2}(\|\bm{f}_f\|_*+\|f_p\|_*)\right)^{2^{n}-1}(\|\bm{f}_f\|_*+\|f_p\|_*)\\
=&(\frac{10}{3}+\frac{\tilde{C}^2_tC^2_tC^2_k}{\alpha_1\nu}){C_1}
\left(\frac{3C_1\N_d}{2\nu^2}(\|\bm{f}_f\|_*+\|f_p\|_*)\right)^{2^{n}-1}(\|\bm{f}_f\|_*+\|f_p\|_*).
\end{align*}}
The proof is complete.
 \end{proof}

\begin{corollary}
The initial function $\bu^h_0\in \boldsymbol{X}_{f,h}$
defined by the following finite element method for Stokes Darcy problem:
\begin{align}\label{Newton-ini-1}
&a_f(\bu^h_0,\bv_h)+b(\bv_h,p^h_0)+\rho g\langle \varphi^h_0,\bv_h\cdot\bn_f\rangle_{\Gamma}=(\bm{f}_f,\bv_h)~~~~\forall~\bv_h\in \boldsymbol{X}_{f,h},\\\label{Newton-ini-2}
&b(\bu^h_0,q_h)=0~~~~\forall~q_h\in Q_h,\\\label{Newton-ini-3}
&a_p(\varphi^h_0,\psi_h)=\rho g\langle \psi_h,\bu^h_0\cdot\bn_f\rangle_{\Gamma}+\rho g(f_p,\psi_h)~~~~\forall~\psi_h\in X_{p,h},
\end{align}
satisfies the initial condition \eqref{ini-con1}-\eqref{ini-con2}.

\end{corollary}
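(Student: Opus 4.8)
The plan is to verify the two bounds \eqref{ini-con1} and \eqref{ini-con2} separately, in each case reusing the structural tricks already exploited in Lemma \ref{Newton-sta-lemma} and in the derivation of \eqref{FEM-solu-est}.

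For \eqref{ini-con1}, I would test the linear Stokes Darcy system \eqref{Newton-ini-1}--\eqref{Newton-ini-3} with $\bv_h=\bu^h_0$, $q_h=p^h_0$. Since $b(\bu^h_0,p^h_0)=0$ by \eqref{Newton-ini-2}, this leaves $a_f(\bu^h_0,\bu^h_0)+\rho g\langle\varphi^h_0,\bu^h_0\cdot\bn_f\rangle_\Gamma=(\bm{f}_f,\bu^h_0)$. The interface term is handled exactly as before: because \eqref{Newton-ini-3} has the same form as \eqref{FEM-darcy}, I decompose $\varphi^h_0=\phi^b_0+\phi^i_0$, where $\phi^b_0=T_h(\bu^h_0\cdot\bn_f)$ solves \eqref{aux-problem-darcy} and $\phi^i_0=T_h(f_p)$ solves \eqref{aux-problem-2}. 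The positivity argument behind \eqref{FEM-solu-est-2} gives $\rho g\langle\varphi^h_0,\bu^h_0\cdot\bn_f\rangle_\Gamma\geq\rho g\langle\phi^i_0,\bu^h_0\cdot\bn_f\rangle_\Gamma$, whence $2\nu\|\dd(\bu^h_0)\|^2_{0,\Omega_f}\leq(\bm{f}_f,\bu^h_0)-\rho g\langle\phi^i_0,\bu^h_0\cdot\bn_f\rangle_\Gamma$. Bounding $(\bm{f}_f,\bu^h_0)\leq\|\bm{f}_f\|_*\|\dd(\bu^h_0)\|_{0,\Omega_f}$ and, via the trace inequalities, Korn's inequality and \eqref{aux-est-1}, $\rho g\langle\phi^i_0,\bu^h_0\cdot\bn_f\rangle_\Gamma\leq\rho g C_r\tilde C_t C_t C_k\|f_p\|_*\|\dd(\bu^h_0)\|_{0,\Omega_f}$, then cancelling one factor of $\|\dd(\bu^h_0)\|_{0,\Omega_f}$ yields $2\nu\|\dd(\bu^h_0)\|_{0,\Omega_f}\leq\|\bm{f}_f\|_*+\rho g C_r\tilde C_t C_t C_k\|f_p\|_*$. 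Since $1\leq C_1$ and $\rho g C_r\tilde C_t C_t C_k\leq C_1$, the right side is at most $C_1(\|\bm{f}_f\|_*+\|f_p\|_*)$, which is precisely \eqref{ini-con1}.

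For \eqref{ini-con2}, I would set $\be=\bu_h-\bu^h_0$, $e^\varphi=\varphi_h-\varphi^h_0$, $e^p=p_h-p^h_0$ and subtract the linear system \eqref{Newton-ini-1}--\eqref{Newton-ini-3} from the nonlinear scheme \eqref{FEM-1}--\eqref{FEM-2}. The error identity reads $a_f(\be,\bv_h)+a_p(e^\varphi,\psi_h)+a_\Gamma(\be,\psi_h;\bv_h,e^\varphi)+b(\bv_h,e^p)+c(\bu_h;\bu_h,\bv_h)=0$ together with $b(\be,q_h)=0$, the convective term surviving because it has no counterpart in the linear problem. Testing with $(\bv_h,\psi_h,q_h)=(\be,e^\varphi,e^p)$, the skew-symmetric coupling $a_\Gamma(\be,e^\varphi;\be,e^\varphi)=\rho g\int_\Gamma(e^\varphi\be-e^\varphi\be)\cdot\bn_f$ vanishes identically and $b(\be,e^p)=0$, leaving $a_f(\be,\be)+a_p(e^\varphi,e^\varphi)=-c(\bu_h;\bu_h,\be)$. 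As both terms on the left are nonnegative, $2\nu\|\dd(\be)\|^2_{0,\Omega_f}\leq|c(\bu_h;\bu_h,\be)|\leq\N_d\|\dd(\bu_h)\|^2_{0,\Omega_f}\|\dd(\be)\|_{0,\Omega_f}$ by \eqref{tri-est}. Cancelling one factor of $\|\dd(\be)\|_{0,\Omega_f}$ and then controlling $\|\dd(\bu_h)\|^2_{0,\Omega_f}$ by using \eqref{FEM-priori-est2} on one factor and \eqref{FEM-solu-est} on the other gives $2\nu\|\dd(\be)\|_{0,\Omega_f}\leq\N_d\cdot\frac{\nu}{2\N_d}\cdot\frac{C_1}{\nu}(\|\bm{f}_f\|_*+\|f_p\|_*)$, i.e. $\|\dd(\be)\|_{0,\Omega_f}\leq\frac{C_1}{4\nu}(\|\bm{f}_f\|_*+\|f_p\|_*)$, which is even stronger than \eqref{ini-con2}.

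The Cauchy--Schwarz, trace and Korn estimates are routine. The two load-bearing observations are: first, that the positivity of $a_p(\phi^b_0,\phi^b_0)$ lets me discard the self-interaction part of the interface term in \eqref{ini-con1} (the same mechanism as in Lemma \ref{Newton-sta-lemma}); and second, that the skew-symmetry of $a_\Gamma$ makes the whole Stokes Darcy coupling disappear under the diagonal test in \eqref{ini-con2}, reducing the estimate to a single trilinear term. I expect the only delicate point to be invoking the two distinct a priori bounds \eqref{FEM-priori-est2} and \eqref{FEM-solu-est} on the correct factors of $\|\dd(\bu_h)\|^2_{0,\Omega_f}$, so that the final constant comes out as $C_1/(4\nu)$ rather than something weaker.
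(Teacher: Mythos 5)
Your proposal is correct and takes essentially the same route as the paper: for \eqref{ini-con1} the identical decomposition $\varphi^h_0=\phi^b_0+\phi^i$ with positivity of $a_p(\phi^b_0,\phi^b_0)$, and for \eqref{ini-con2} the same error equations reduced to estimating the single trilinear term $c(\bu_h;\bu_h,\be^u_0)$, arriving at the same bound $\frac{C_1}{4\nu}(\|\bm{f}_f\|_*+\|f_p\|_*)$. Your only (harmless) deviations are cosmetic: you test the error system diagonally with $(\be^u_0,e^{\varphi}_0)$ and invoke the skew-symmetry of $a_\Gamma$, whereas the paper obtains the identical cancellation by the two separate tests $(\be^u_0,0)$ and $(\mathbf{0},e^{\varphi}_0)$; and you bound $\N_d\|\dd(\bu_h)\|^2_{0,\Omega_f}$ by pairing \eqref{FEM-priori-est2} with \eqref{FEM-solu-est}, while the paper squares \eqref{FEM-solu-est} and then absorbs one power of the data via the stability condition \eqref{Newton-assume1} --- both legitimate under the paper's standing assumptions and yielding the same constant.
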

\begin{proof}

Taking $\zeta=\bu^h_0\cdot\bn_f$ in auxiliary problem $\eqref{aux-problem-darcy}$ and $\eta=f_p$ in $(\ref{aux-problem-2})$, the corresponding solutions are denoted by $\phi^b_0$ and $\phi^i$, respectively. Then, we have $\phi^b_0=T_h(\bu^h_0\cdot\bn_f)$ and $\|\phi^i\|_{1,\Omega_p}\leq C_r\|f_p\|_*$. % Taking $\eta=f_p$ in Eq.$(\ref{aux-problem-2})$ and denote the solution by $\phi^i$. Then we have $\|\phi^i\|_{1,\Omega_p}\leq C_r\|f_p\|_*$ from $(\ref{aux-est-1})$.
According to the superposition principle, we know that $\varphi^h_0=\phi^b_0+\phi^i$ solves $(\ref{Newton-ini-3})$, which implies that %and using Lemma $\ref{aux-problem-1}$, we have
\begin{align}\label{tool-1}
\rho g \langle \varphi^h_0,\bv^h_0\cdot\bn_f\rangle_{\Gamma}&=\rho g\langle \phi^b_0, \bv^h_0\cdot\bn_f\rangle_{\Gamma}+\rho g\langle\phi^i, \bv^h_0\cdot\bn_f\rangle_{\Gamma}\\\nonumber
&=a_p(T_h(\bv^h_0\cdot\bn_f),\phi^b_0)+\rho g \langle\phi^i, \bv^h_0\cdot\bn_f\rangle_{\Gamma}.
\end{align}
Taking $\bv_h=\bu^h_0$ in $(\ref{Newton-ini-1})$, and using (\ref{tool-1}) and \eqref{aux-est-1}, we have
\begin{align}\label{stokes-darcy-sta}
2\nu\|\dd(\bu^h_0)\|^2_{0,\Omega_f}&\leq (\bm{f}_f,\bu^h_0)-\rho g \langle \varphi^h_0,\bu^h_0\cdot\bn_f\rangle_{\Gamma}\\\nonumber
&\leq (\bm{f}_f,\bu^h_0)-a_p(\phi^b_0,\phi^b_0)-\rho g\langle\phi^i, \bu^h_0\cdot\bn_f\rangle_{\Gamma}\\\nonumber
&\leq (\bm{f}_f,\bu^h_0)-\rho g \langle\phi^i, \bu^h_0\cdot\bn_f\rangle_{\Gamma}\\\nonumber
&\leq \|\bm{f}_f\|_*\|\dd(\bu^h_0)\|_{0,\Omega_f}+\rho g C_r\tilde{C}_{t}C_{t}C_k\|f_p\|_*\|\dd(\bu^h_0)\|_{0,\Omega_f}\\\nonumber
&\leq C_1(\|\bm{f}_f\|_*+\|f_p\|_*)\|\dd(\bu^h_0)\|_{0,\Omega_f},
\end{align}
which yields $(\ref{ini-con1})$.

Next, we verify the initial condition $(\ref{ini-con2})$. Let $\bu_h,~\varphi_h$ be the solution of finite element scheme $(\ref{FEM-1})-(\ref{FEM-2})$. For convenience, set $\be^u_0=\bu_h-\bu^0_h$, $e^{\varphi}_0=\varphi_h-\varphi^0_h$.

Subtracting $\eqref{Newton-ini-1}-\eqref{Newton-ini-3}$ from $\eqref{FEM-1}-\eqref{FEM-2}$ gives the error equations:%Taking $\psi_h=0$ in $(\ref{FEM-1})$ and subtracting $(\ref{Newton-ini-1})$ from $(\ref{FEM-1})$ leads to the error equation:
 \begin{align}\label{ini-err-equ-1}
 &a_f(\be^u_0,\bv_h)+a_p(e^{\varphi}_0,\psi_h)+a_{\Gamma}(\be^u_0,\psi_h;\bv_h,e^{\varphi}_0)+b(\bv_h,e^p_0)
 +c(\bu_h;\bu_h,\bv_h)=0,\\\label{ini-err-equ-2}
% \end{eqnarray}
%The difference of $(\ref{Newton-ini-2})$ and $(\ref{FEM-2})$ yields the following equation:
 &b(\be^u_0,q_h)=0,
 \end{align}
for any $\bv_h\in \boldsymbol{X}_{f,h},~\psi_h\in X_{p,h}$ and $q_h\in Q_h$.\\
Taking $\bv_h=\be^u_0,~\psi_h=0$ in $(\ref{ini-err-equ-1})$ and noting that $b(\be^u_0,e^p_0)=0$, we have %with $(\ref{ini-err-equ-2})$, we get
\begin{align}\label{err-tool}
\nu\|\dd(\be^u_0)\|^2_{0,\Omega_f}= -\rho g\langle e^{\varphi}_0,\be^u_0\cdot\bn_f\rangle_{\Gamma}-c(\bu_h;\bu_h,\be^u_0).
\end{align}
%Then, choosing $\bv_h=\mathbf{0}$ in $(\ref{FEM-1})$ and subtracting $(\ref{Newton-ini-1})$ into $(\ref{FEM-1})$, we know that
% \begin{eqnarray}\label{ini-err-equ-3}
%a_p(e^{\varphi}_0,\psi_h)=\rho g\langle\be^u_0\cdot\bn_f,\psi_h\rangle_{\Gamma}.
%\end{eqnarray}
Now, it suffices to estimate the terms on the right side of $\eqref{err-tool}$. Taking $\bv_h=\mathbf{0}$, $\psi_h=e^{\varphi}_0$ in $(\ref{ini-err-equ-1})$ yields that
\begin{align}\label{ini-err-sd1}
\rho g\langle\be^u_0\cdot\bn_f,e^{\varphi}_0\rangle_{\Gamma}=a_p(e^{\varphi}_0,e^{\varphi}_0)\geq 0.
\end{align}
Using the estimates $(\ref{tri-est})$ and $(\ref{FEM-solu-est})$ and the assumption $(\ref{Newton-assume1})$, we have
\begin{align}\nonumber
|c(\bu_h;\bu_h,\be^u_0)|&\leq \N_d\|\dd(\bu_h)\|^2_{0,\Omega_f}\|\dd(\be^u_0)\|_{0,\Omega_f}\\\nonumber
&\leq \frac{9\N_dC^2_1}{4\nu^2}\left(\|\bm{f}_f\|_*+\|f_p\|_*\right)^2\|\dd(\be^u_0)\|_{0,\Omega_f}\\\nonumber
&\leq \frac{9\N_dC^2_1}{4\nu^2}\cdot \frac{\nu^2}{9C_1\N_d}\left(\|\bm{f}_f\|_*+\|f_p\|_*\right)\|\dd(\be^u_0)\|_{0,\Omega_f}\\\label{ini-err-sd2}
&\leq \frac{1}{4}C_1\left(\|\bm{f}_f\|_*+\|f_p\|_*\right)\|\dd(\be^u_0)\|_{0,\Omega_f}.
\end{align}
By the estimates $\eqref{ini-err-sd1}$ and $\eqref{ini-err-sd2}$,
\begin{align*}%\label{stokes-darcy-err}
\|\dd(\be^u_0)\|_{0,\Omega_f}\leq \frac{C_1}{4\nu}\left(\|\bm{f}_f\|_*+\|f_p\|_*\right).
\end{align*}
which implies the initial condition $(\ref{ini-con2})$.

\end{proof}

\section{The Int-Deep method}\label{sec:algo}
In this section, we first design a PINN-type (cf. \cite{Karniadakis2019,Karniadakis2021}) deep learning method for the Navier-Stokes Darcy model and then propose the Int-Deep algorithm, where the solution obtained from the DL method serves as an initial function for Newton iterative method.%after a few training steps as the initial value for the Newton iteration method.
\subsection{Deep learning method}\label{sec:DL}
In a DL method, the solution of a PDE is approximated by deep neural networks functions. In this work, we employ the classical residual neural network (ResNet) \cite{2016-Resnet} to approximate  the solutions of the Navier-Stokes Darcy model. For this purpose, we first recall its structure. Given an input $\boldsymbol{x}\in\mathbb{R}^{d}$,
consider the following residual network architecture with skip connection in each layer.
$$
\begin{aligned}
\boldsymbol{h}_0 &=\boldsymbol{V} \boldsymbol{x}, \\
\boldsymbol{g}_l &=\sigma\left(\boldsymbol{W}_l \boldsymbol{h}_{l-1}+b_{l}\right),\;l=1, \ldots, L, \\
\boldsymbol{h}_l &=\boldsymbol{h}_{l-1}+ \boldsymbol{g}_l,\hspace{1.2cm} l=1, \ldots, L, \\
\boldsymbol{\phi}(\boldsymbol{x} ; \boldsymbol{\tilde{\theta}}) &=\boldsymbol{a}^T \boldsymbol{h}_L,
\end{aligned}
$$
where $\boldsymbol{\tilde{\theta}}=\{\boldsymbol{W}_l,\boldsymbol{b}_{l},\boldsymbol{a}\}, \boldsymbol{V} \in \mathbb{R}^{m \times d}$,
\[
V(i,j)=
\begin{cases}
1  &i=j,j\le d\\
0  &\mathrm{otherwise}
\end{cases}
,\; \boldsymbol{W}_l \in \mathbb{R}^{m \times m},\; \boldsymbol{b}_l\in\mathbb{R}^{m},
\]
$\boldsymbol{a} \in\mathbb{R}^{m\times n}$, $L$ is the number of layers, $m$ is the width of the residual blocks and $\sigma$ is an activation function.

%and boundary conditions \cite{JK-2018-BC, Liao2021, Sheng2021}, introducing traditional numerical techniques into the deep learning algorithm \cite{Dong2021, Zeng2022}, and designing hybrid algorithm \cite{intdeep2020}.

Next, we present the DL method in detail. %We begin by constructing three neural networks to handle essential boundary condition following the concept proposed in \cite{JK-2018-BC}.
Following the approach in \cite{JK-2018-BC}, for the given essential boundary conditions $\bu=\boldsymbol{g}_{\bu}(\bx),\;\bx\in \Gamma_{f},\;\varphi(\bx)=g_{\varphi}(\bx),\;\bx\in \Gamma_{p}$,
%If the boundary condition is given by  $\boldsymbol{u=g_{u}(x)},\boldsymbol{x\in \Gamma_{f}},\varphi(x)=g_{\varphi}(\boldsymbol{x}),\boldsymbol{x\in \Gamma_{p}}$ ,
we construct three neural network functions
$\boldsymbol{\phi}_{\bs{u}}(\bx;\btheta_{\bs{u}})$,
 $\phi_p(\bx;\btheta_p)$ and $\phi_{\varphi}(\bx;\btheta_{\varphi})$ as follows.
%\begin{equation*}
\begin{align*}
        \boldsymbol{\phi}_{\bs{u}}(\bx;\btheta_{\bs{u}})&=B_{\bs{u}}(\bx)\boldsymbol{N}_{\bs{u}}(\bx;\btheta_{\bs{u}})+\boldsymbol{\bar{g}}_{\bu}(\bx),\\
        \phi_{p}(\bx;\btheta_{p})&=N_{p}(\bx;\btheta_{p}),\\ \phi_{\varphi}(\bx;\btheta_{\varphi})&=B_{\varphi}(\bx)N_{\varphi}(\bx;\btheta_{\varphi})+\bar{g}_{\varphi}(\boldsymbol{x}),
%\end{equation*}
\end{align*}
where $B_{\bs{u}}(\bx)$ and $B_{\varphi}(\boldsymbol{x})$ are two smooth scalar functions which satisfy the conditions $B_{\bs{u}}(\bx)|_{\Gamma_f}=0$ and $B_{\varphi}(\boldsymbol{x})|_{\Gamma_p}=0$; $\boldsymbol{N}_{\bs{u}}(\bx;\btheta_{\bs{u}})$, $N_p(\bx;\btheta_p)$ and $N_{\varphi}(\bx;\btheta_{\varphi})$ are three distinct residual neural network functions; $\boldsymbol{\bar{g}_{u}}$ is the extension of $\boldsymbol{g_{u}}$ to $\Omega_{f}$, and $\bar{g}_{\varphi}$ is the extension of $g_{\varphi}$ to $\Omega_{p}$. Obviously, all the above neural network functions automatically satisfy the prescribed boundary conditions.

Setting $\btheta:=\{\btheta_{\bs{u}},\btheta_{p},\btheta_{\varphi}\}$, we respectively approximate $\boldsymbol{u}$, $p$ and $\varphi$ by
\[
\bu(\bx)\approx\boldsymbol{\phi}_{\bs{u}}(\bx;\btheta_{\bs{u}}), \quad  p(\bx)\approx \phi_{p}(\bx;\btheta_{p}), \quad \varphi\approx \phi_{\varphi}(\bx;\btheta_{\varphi}),
\]
with the network parameters determined by minimizing the $\log$ square loss
\begin{equation}\label{eq:3.1}
    \boldsymbol{\theta^{*}}=\argmin_{\boldsymbol{\theta}}\log_{2}(I(\boldsymbol{\phi}_{\bs{u}}(\bx;\btheta_{\bs{u}}),\phi_{p}(\bx;\btheta_{p}),
    \phi_{\varphi}(\bx;\btheta_{\varphi}))+1),
\end{equation}

where
\begin{equation}\nonumber
\small
\begin{aligned}
&I(\boldsymbol{u},p,\varphi)\\
&=\gamma_{1} \left(|\Omega_{f}|\mathbb{E}_{\bxi} \left(\left\| -\operatorname{div}\bT\left(\bu(\bxi), p(\bxi)\right)+\rho\left(\bu(\bxi) \cdot \nabla\right) \bu(\bxi)-\boldsymbol{f}_{f}(\bxi) \right\|^{2}+ \left| \operatorname{div} \bu(\bxi)\right|^{2}\right) \right.\\
&\left.+|\Omega_{p}|\mathbb{E}_{\boldsymbol{\eta}} \left(\left|f_{p}(\boldsymbol{\eta})+\operatorname{div}(\boldsymbol{K} \nabla \varphi(\boldsymbol{\eta}))\right|^{2}\right) \right)\\
&+\gamma_{2}\left(|\Gamma| \mathbb{E}_{\boldsymbol{\zeta}} \left(\left|\bu(\boldsymbol{\zeta}) \cdot \boldsymbol{n_{f}} +\boldsymbol{K} \nabla \varphi(\boldsymbol{\zeta}) \cdot \boldsymbol{n}_{f}\right|^{2}\right.\right. \\
&+||\bs{K}||_{2}\left| \left(\rho g \varphi(\boldsymbol{\zeta}) +(\boldsymbol{T}(\bu(\boldsymbol{\zeta}),p(\boldsymbol{\zeta}))\boldsymbol{n}_{f})\cdot \boldsymbol{n}_{f})\right)\right|^{2}\\
&+\sum_{j=1}^{d-1}\Big(\big|(\boldsymbol{T}(\bu(\boldsymbol{\zeta}), p(\boldsymbol{\zeta}))\boldsymbol{n}_{f}) \cdot \boldsymbol{\tau}_{j}+\frac{\nu \alpha_{B J}}{\sqrt{\boldsymbol{\tau}_{j} \cdot \nu \boldsymbol{K} \cdot \boldsymbol{\tau}_{j}}} \bu(\boldsymbol{\zeta}) \cdot \boldsymbol{\tau}_{j}  \big|^{2}\Big)\bigg).
\end{aligned}
\end{equation}
Here $\gamma_{1}$, $\gamma_{2}$ are two positive parameters, $\|\cdot\|$ is the Euclidean norm of a vector, $\|\cdot\|_{2}$ is the 2-norm of a matrix, and $\boldsymbol{\xi}, \boldsymbol{\eta}, \boldsymbol{\zeta}$ are random variables that are uniformly distributed on $\Omega_{f},\;\Omega_{p}$ and $\Gamma$, respectively. Furthermore, we use the Monte-Carlo method to approximate the related expectation to derive the discretization of \eqref{eq:3.1}, which is solved by the stochastic gradient descent (SGD) method or its variants (e.g. Adam \cite{2014Adam}).

With these preparations, the DL algorithm for Navier-Stokes Darcy model $(\ref{model-pro1})-(\ref{model-pro4})$ is described as Algorithm \ref{alg:DL}. %presented in Algorithm \ref{alg:DL}.
\vskip 0.5cm
\begin{algorithm}[H]
\footnotesize
	\caption{Deep learning method for Navier-Stokes Darcy problem.}
	\label{alg:DL}
	\textbf{Input}: the maximum number of training $Epoch$,
        the learning rate $\eta$,  sample size $N_{1},N_{2}$ and $ N_{3}$.\\
	\textbf{Output}:
	$$
	\bu^{DL}=\boldsymbol{\phi}_{\bs{u}}(\bx,\btheta_{\bs{u}}^{Epoch}),~~p^{DL}=\phi_{p}(\bx,\btheta_{p}^{Epoch}),~~
    \varphi^{DL}=\phi_{\varphi}(\bx,\btheta_{\varphi}^{Epoch}).
	$$
\begin{algorithmic}
		\STATE {\textbf{Initialization}: Let $l=0,~\eta^{0}=\eta$, {\color{black}the default initialization method in PyTorch is used to initialize the parameters of neural networks  $\btheta_{\bs{u}}^{0},~\btheta_{\bs{p}}^{0},~\btheta_{\bs{\varphi}}^{0}$.}}
		\WHILE {$l < Epoch$ }
		\STATE { {\color{black}Generate independent random variables  $\{\bs{\xi}_{i}\}_{i=1}^{N_{1}},\;\{\bs{\eta}_{i}\}_{i=1}^{N_{2}}$, and $\{\bs{\zeta}_{i}\}_{i=1}^{N_{3}}$ that are uniformly distributed on $\Omega_f$, $\Omega_{p}$, and $\Gamma$, respectively.} }
		\STATE {Computing loss:
		\begin{equation}\label{eq:3.2}
        L= \log_{2}(I_{1}(\boldsymbol{\phi}_{\bs{u}}(\btheta_{\bs{u}}^{l}),\phi_{p}(\btheta_{p}^{l}),\phi_{\varphi}(\btheta_{\varphi}^{l}))+1),
        \end{equation}
        where
        \begin{equation}\nonumber
        \begin{aligned}
        &I_{1}(\bu,p,\varphi)\\
        &=\gamma_{1} \left(\frac{|\Omega_{f}|}{N_1} \sum _{i=1}^{N_{1}} (\left\| -\operatorname{div}\boldsymbol{T}\left(\bu(\bxi_{i}), p(\bxi_{i})\right)+\rho\left(\bu(\bxi_{i}) \cdot \nabla\right) \bu(\bxi_{i})-\boldsymbol{f}_{f}(\bxi_{i}) \right\|^{2}\right.\\
        &\left.+ \left| \operatorname{div}\bu(\bxi_{i})\right|^{2}) +\frac{|\Omega_{p}|}{N_2} \sum _{i=1}^{N_{2}} \left(\left|f_{p}(\bta_{i})+\operatorname{div}(\boldsymbol{K} \nabla \varphi(\bta_{i}))\right|^{2}\right) \right)\\
        &+\gamma_{2}\bigg(\frac{|\Gamma|}{N_{3}} \sum _{i=1}^{N_{3}} (\big|\bu(\boldsymbol{\zeta}_{i}) \cdot \boldsymbol{n}_{f} +\boldsymbol{K} \nabla \varphi(\boldsymbol{\zeta}_{i}) \cdot \boldsymbol{n}_{f}\big|^{2}\\
        &+||\bs{K}||_{2}\big|\left(\rho g \varphi(\boldsymbol{\zeta}_{i}) +(\boldsymbol{T}(\bu(\boldsymbol{\zeta}_{i}),p(\boldsymbol{\zeta}_{i})) \boldsymbol{n}_{f})\cdot \boldsymbol{n}_{f})\right)\big|^{2}\\
        &+\sum_{j=1}^{d-1}\Big(\big|(\boldsymbol{T}(\bu(\boldsymbol{\zeta}_i), p(\boldsymbol{\zeta}_{i})) \boldsymbol{n}_{f}) \cdot \boldsymbol{\tau}_{j}+\frac{\nu \alpha_{B J}}{\sqrt{\boldsymbol{\tau}_{j} \cdot \nu \boldsymbol{K} \cdot \boldsymbol{\tau}_{j}}} \bu(\boldsymbol{\zeta}_{i}) \cdot \boldsymbol{\tau}_{j}  \big|^{2}\Big)\bigg).\\
        \end{aligned}
        \end{equation}}
        \vspace{-0.3cm}
        \STATE{Updating parameters $\btheta_{\bs{u}}^{l+1}= \btheta_{\bs{u}}^{l}-\eta^{l}
        \nabla_{\boldsymbol{\btheta_{\bs{u}}}}L,~ \btheta_{p}^{l+1}= \btheta_{p}^{l}- \eta^{l}
        \nabla_{\boldsymbol{\btheta_{p}}}L,~ \btheta_{\varphi}^{l+1}= \btheta_{\varphi}^{l}- \eta^{l}
        \nabla_{\boldsymbol{\btheta_{\varphi}}}L.$}
        \STATE{Let $l=l+1.$}
		\ENDWHILE
	\end{algorithmic}
\end{algorithm}

\subsection{The Int-Deep method for solving the Navier-Stokes Darcy problem}\label{sec:int-deep}
Since Newton iterative method is only locally convergent, it is very challenging to choose reasonably an initial guess. One natural choice is taking the initial guess as the solutions to the Stokes Darcy model (cf. \eqref{Newton-ini-1}-\eqref{Newton-ini-3}). However, the convergence is very sensitive to the physical parameters in the model. Motivated by the ideas in \cite{intdeep2020}, we are tempted to choose $I_h\bu^{DL}$ as an initial guess, where $\bu^{DL}$ denotes the deep learning solution obtained from Algorithm \ref{alg:DL} with few iteration steps, and $I_h$ is the usual nodal interpolation operator (cf. \cite{BrennerScott2008,Ciarlet1978}). Similar to \cite{intdeep2020}, this method is referred to as the Int-Deep method for the Navier-Stokes Darcy problem $(\ref{model-pro1})-(\ref{model-pro4})$, and is described as Algorithm \ref{alg:int-deep}.
It is worthy to emphasize that the later choice may rely partly on the so-called frequency principle mentioned in \cite{Xu-2020-frequency, Xu-2019-frequency}, as mentioned in the introduction part. Our numerical results also indicate the superiority of such a choice.

\begin{algorithm}[H]
\setstretch{1.45}
	\caption{Int-Deep method for Navier-Stokes Darcy problem.}
	\label{alg:int-deep}
	\textbf{Input}: the target accuracy $\epsilon$, the maximum number of iterations $N_{\max}$,
	the approximate solution in a form of a DNN $\bu^{DL}$.\\
	\textbf{Output}: $\boldsymbol{u}^h_{ID} = \boldsymbol{u}^h_{n},\;p^h_{ID} = p_{n}^{h},\;\varphi^h_{ID} = \varphi^h_{n}$.

\begin{algorithmic}
		\STATE {\textbf{Initialization}: Let $\boldsymbol{u}_0^h = {I_h \bu^{DL}}$, $n=1$, and $e_0=1$.}
		\WHILE {$e_{n-1} > \epsilon$ and $n \le N_{\max}$}
		\STATE {  find $\bu^h_n\in \mathbf{X}_{f,h},\;p^h_n\in Q_h$, and $\varphi^h_n\in X_p$ such that
        \begin{align}\nonumber
        &a_f(\bu^h_n,\bv_h)+b(\bv_h,p^h_n)+\rho g \langle\phi^h_n, \bv_h\cdot\bn_f\rangle_{\Gamma}+c(\bu^h_n;\bu^h_{n-1},\bv_h)
        +c(\bu^h_{n-1};\bu^h_n,\bv_h)\\ \nonumber
        &=(\bm{f}_f,\bv_h)+c(\bu^h_{n-1};\bu^h_{n-1},\bv_h),\\ \nonumber
        &b(\bu^h_n,q_h)=0,\\ \nonumber
        &a_p(\varphi^h_n,\psi_h)=\rho g \langle \psi_h,\bu^h_n\cdot\bn_f\rangle_{\Gamma}+\rho g(f_p,\psi_h).
        \end{align}}
        \vspace{-0.3cm}
		\STATE {$e_{n} = \max(\frac{\| \boldsymbol{u}_{n}^h - \boldsymbol{u}_{n-1}^h \|_{0,\Omega_f}}{\| \boldsymbol{u}_{n-1}^h \|_{0,\Omega_f}},
                              \frac{\| \varphi_{n}^h - \varphi_{n-1}^h \|_{0,\Omega_p}}{\| \varphi_{n-1}^h \|_{0,\Omega_p}},
                              \frac{\| p_{n}^h - p_{n-1}^h \|_{0,\Omega_f}}{\| p_{n-1}^h \|_{0,\Omega_f}}$), $n = n+1$.}
		\ENDWHILE
	\end{algorithmic}
\end{algorithm}

\section{Numerical experiments}\label{sec:test}

In this section, we shall present some numerical experiments to illustrate the performance of Algorithm \ref{alg:DL} and Algorithm \ref{alg:int-deep} for Navier-Stokes Darcy problem.

The specific details about the implementation of algorithms are provided in advance. In  Algorithm \ref{alg:DL}, we use ResNets of width 50, depth 5 for the Navier-Stokes problem and width 50, depth 6 for the Darcy problem. the activation function in all neural networks is taken as $\sigma=\max\{x^3,~0\}$. During the training process, we train the neural networks using the Adam optimizer \cite{2014Adam} with a learning rate of $\eta = 5\text{e}-03$. The learning rate decays exponentially and the decay rate is $0.01^\frac{1}{10000}$. The batch size is taken as: $N_1=N_2=1024$, $N_3=256$ for all 2D examples; $N_1=N_2=3000$, $N_3=512$ for the 3D example.
%using the activation function $\sigma(x) = \max\{x, 0\}^3$ for the Navier-Stokes section, and ResNets with a width of 50 and a depth of 6, using the same activation function for the Darcy section. We trained the neural networks using the Adam optimizer \cite{2014Adam} with a learning rate of $\eta = 5\text{e}-03$. To ensure effective optimization, we employed an exponentially decaying scheme to adjust the learning rate, with a decay rate of $0.01^\frac{1}{10000}$. Additionally, we shall use the following parameters in Algorithm\ref{alg:DL}: $N_{1} =N_{2}=1024, N_{3} = 256$ for all 2D examples  and $N_{1} =N_{2}=3000, N_{3} = 512$  for all 3D examples .
In Algorithm \ref{alg:int-deep}, we set $N_{\max}=20$, $\epsilon =1e-7$ for all examples. %We keep 5 valid digits when measuring all errors in the numerical results.%The finite element spaces are constructed by P2-P1 elements for the fluid region and P2 Lagrange elements for the porous media region.
%Deep learning algorithms are implemented by Python 3.7 using PyTorch 1.11.0 and a single NVIDIA Quadro
%P6000 GPU card. All ﬁnite element methods are implemented in MATLAB 2018b in an Intel Core i7, 2.6 GHz CPU on a
%personal laptop with a 32 GB RAM

Some notations in the following part are summarized here. In numerical experiments, different strategies are employed to generate initial guesses for Newton iterative method, and corresponding results will be compared with the Int-Deep algorithm. We will refer to $\bs{u}^{DL}_{0}$ as the initial guess $I_{h}\bs{u}^{DL}$ where $\bs{u}^{DL}$ is provided by Algorithm \ref{alg:DL} with 200 epochs in the Adam, $\boldsymbol{u}^{SD}_{0}$ as the initial guess obtained by solving the discretized Stokes Darcy problem \eqref{Newton-ini-1} -\eqref{Newton-ini-3}, and $\boldsymbol{C}$ means a finite element function whose vector components under the nodal basis functions are taken as a constant $C$. The number of epochs in the Adam for DL method is denoted by $\#$Epoch and the number of iterations in Newton iterative method is denoted by $\#\mathrm{K}$. %Let $\boldsymbol{u}=(u,v)$ or $(u,v,w)$, $p,~\varphi$ be the exact solution to Navier-Stokes Darcy problems $(\ref{model-pro1})-(\ref{model-pro4})$, $\boldsymbol{u}^{DL}=(u^{DL},v^{DL})$ or $(u^{DL},v^{DL},w^{DL})$, $p^{DL}$ and $\varphi^{DL}$ be the exact solution to Algorithm $\ref{alg:DL}$, $\boldsymbol{u}^{ID}=(u^{ID},v^{ID})$ or $(u^{ID},v^{ID},w^{ID})$, $p^{ID}$ and $\varphi^{ID}$ be the exact solution to Algorithm $\ref{alg:int-deep}$.
The accuracy of the solution of Algorithm $\ref{alg:DL}$ is measured by the discrete maximum norm: for any $v\in C(\bar{\Omega})$,
$$
\|v\|_{0,\infty,h}=\max_{x\in \Omega_h}|\bv(\bx)|,
$$
where $\Omega_h$ is the set of all vertices in $\mathcal{T}_h.$
We measure the accuracy of solution of Algorithm \ref{alg:int-deep} by the relative $L^2$-error and $H^1$-error:
$$
\begin{array}{llll}
\|\boldsymbol{e}_{\bs{u}}^h\|_0=\frac{\left\|\boldsymbol{u}-\boldsymbol{u}^h_{ID}\right\|_{0, \Omega_f}}{\left\|\boldsymbol{u}\right\|_{0, \Omega_f}},  &
\|e_p^h\|_0=\frac{\left\|p-p^h_{ID}\right\|_{0, \Omega_f}}{\left\|p\right\|_{0, \Omega_f}}, &
\|e^{\varphi_h}_h\|_0=\frac{\left\|\varphi_h-\varphi^h_{ID}\right\|_{0, \Omega_p}}{\left\|\varphi\right\|_{0, \Omega_p}}, \\
\|\boldsymbol{e}_{\bs{u}}^h\|_1=\frac{\left|\boldsymbol{u}-\boldsymbol{u}^h_{ID}\right|_{1, \Omega_f}}{\left|\boldsymbol{u}\right|_{1, \Omega_f}},&
\|e^h_{\varphi}\|_1=\frac{\left|\varphi-\varphi^h_{ID}\right|_{1, \Omega_p}}{\left|\varphi\right|_{1, \Omega_p}}.&
\end{array}
$$

\subsection{A 2D--example with a closed-form solution}
 In this example, we consider the coupled problem $(\ref{model-pro1})-(\ref{model-pro4})$ in $\Omega\in \mathbb{R}^2$ with $\Omega_{f}=(0,\pi) \times(0,\pi),\;\Omega_{p}=(0,\pi) \times$ $(-\pi,0)$ and the interface $\Gamma=(0,\pi) \times\{0\}$. %The boundary conditions and right hand side functions of the coupled Navier-Stokes Darcy model are chosen so that
 The exact solution is given by % $(\boldsymbol{u}, \varphi, p)$ is given by:
\begin{equation*}
  \begin{cases}
        u=2 \sin y \cos y \cos x  &\text { in } \Omega_{f},  \\
        v=\left(\sin ^{2} y-2\right) \sin x &\text { in } \Omega_{f} ,\\
        p=\sin x \sin y + \frac{1}{3\kappa}, & \text { in } \Omega_{f},\\
        \varphi =\frac{1}{\kappa}(\left(e^{y}-e^{-y}\right) \sin x+\frac{1}{3}) &\text { in } \Omega_{p},
  \end{cases}
\end{equation*}
where the components of $\boldsymbol{u}$ are denoted by $(u, v)$ for convenience. For simplicity, all the parameters except $\nu$ and $\kappa~ (\boldsymbol{K}=\kappa\boldsymbol{I}) $ in the coupled model are set to
be 1. In \eqref{eq:3.2}, we set $\gamma_{1}=200,\;\gamma_{2} = 1$; $B_{\bs{u}}(\bx)= x(x-\pi)(y-\pi)$ and $B_{\varphi}(\boldsymbol{x})= x(x-\pi)(y+\pi)$.  %where $\boldsymbol{x}=(x,y)$.
To evaluate the test error of DL method, we adopt a mesh size $h=\frac{1}{1024}$ to generate the uniform triangulation $\mathcal{T}_h$ as the test locations.

\subsubsection{The Performance of Algorithm \ref{alg:DL}}
In this part, we focus on the  performance of Algorithm \ref{alg:DL} and  the results are presented in Table $\ref{tab:DLnu=1K=1} - \ref{tab:DLnu=1e-4K=1e-8}$. And we denote the components of $\bs{u}^{DL}$ obtained by Algorithm \ref{alg:DL} as $(u^{DL},v^{DL})$. Here are some observations.

\begin{enumerate}
    \item As the viscosity coefficient $\nu$ decreases, the absolute discrete maximum errors of all unknowns can reach $O(10^{-3})$ after 10000 epochs but it is difficult to further improve the accuracy with the increase of the iterations. When both the viscosity coefficient $\nu$ and permeability $\kappa$ are set to be extremely small, the errors of pressure $p$ and hydraulic head $\varphi$ can only reach $O(10^{-1})$, and the error of velocity $\boldsymbol{u}$ can still reach $O(10^{-3})$ after 10000 epochs. %For different values of viscosity and hydraulic conductivity, the absolute discrete maximum errors of $u$ and $v$  can reach $O(10^{-3})$ after 10000 epochs, but it is difficult to obtain a highly accurate solution using the deep learning method. Besides, $||\kappa*(p-p^{DL})||_{0,\infty,h}$ and  $ ||\kappa*(\varphi-\varphi^{DL})||_{0,\infty,h}$ can also reach $O(10^{-3})$ after 10000 epochs for  $\kappa=1$, but  $||\kappa * (p-p^{DL})||_{0,\infty,h}$ and  $ ||\kappa*(\varphi-\varphi^{DL})||_{0,\infty,h}$ can only reach $O(10^{-1})$ for $\kappa=1e-08$.

    \item For different values of viscosity coefficient $\nu$ and conductivity $\kappa$, the absolute discrete maximum errors of velocity $\boldsymbol{u}$ can reach $O(10^{-1})$ after 200 epochs. Therefore, the approximate velocity $\boldsymbol{u}^{DL}$ generate by Algorithm \ref{alg:DL} with $O(100)$ iterations can serve as a good initial guess for Algorithm \ref{alg:int-deep}. %According to \cite{intdeep2020}, we speculate that $\boldsymbol{u}^{DL}$ in Algorithm \ref{alg:int-deep} can be obtained by running Algorithm \ref{alg:DL} with 200 epochs in the Adam optimizer.
\end{enumerate}

\begin{table}[H]\scriptsize
     \centering
\setlength{\tabcolsep}{2.5pt}
\begin{tabular}{ccccc}
\hline
$\#$Epoch & $||u-u^{DL}||_{0,\infty,h}$ & $||v-v^{DL}||_{0,\infty,h}$ & $||\kappa(p-p^{DL})||_{0,\infty,h}$ & $||\kappa(\varphi-\varphi^{DL})||_{0,\infty,h}$  \\
\hline
200 & 1.3646e-01 & 3.6875e-01 & 2.5902e+00 & 4.3976e+00 \\
500 & 5.3574e-02 & 9.2439e-02 & 1.6767e+00 & 1.0690e+00 \\
1000 & 1.0403e-01 & 7.3808e-02 & 8.9771e-01 & 3.1771e-01 \\
1500 & 1.3238e-02 & 5.1877e-02 & 4.8701e-01 & 3.9559e-01 \\
2000 & 9.5502e-02 & 4.3894e-02 & 1.5858e-01 & 1.7449e-01 \\
2500 & 1.9290e-02 & 1.9688e-02 & 1.0117e-01 & 1.3614e-01 \\
3000 & 9.1979e-03 & 6.9558e-03 & 3.8412e-02 & 3.8035e-02 \\
3500 & 3.2457e-03 & 3.6885e-03 & 8.9594e-03 & 1.3716e-02 \\
5000 & 2.3976e-03 & 9.9764e-04 & 3.3161e-03 & 6.7926e-03 \\
7500 & 8.6575e-04 & 6.2326e-04 & 2.1703e-03 & 3.2009e-03 \\
10000 & 6.5700e-04 & 6.2362e-04 & 1.8386e-03 & 1.5161e-03 \\
15000 & 7.5354e-04 & 6.0582e-04 & 1.3077e-03 & 1.5129e-03 \\
20000 & 5.7673e-04 & 5.7690e-04 & 1.3107e-03 & 1.2378e-03 \\
25000 & 6.2294e-04 & 5.3808e-04 & 1.4011e-03 & 1.3049e-03 \\
30000 & 6.0878e-04 & 5.2679e-04 & 1.3369e-03 & 1.2133e-03 \\
\hline
\end{tabular}
    \caption{ The absolute discrete maximum errors of Algorithm\ref{alg:DL} with $\nu=1,\kappa=1$.}
     \label{tab:DLnu=1K=1}
%\end{table}
%\begin{table}[H]\scriptsize
%    \centering
%\setlength{\tabcolsep}{2.5pt}

\begin{tabular}{ccccc}
\hline
$\#$Epoch & $||u-u^{DL}||_{0,\infty,h}$ & $||v-v^{DL}||_{0,\infty,h}$ & $||\kappa(p-p^{DL})||_{0,\infty,h}$ & $||\kappa(\varphi-\varphi^{DL})||_{0,\infty,h}$   \\
\hline
200 & 1.5774e-01 & 1.6119e-01 & 3.2131e+00 & 3.6156e+00 \\
500 & 6.8583e-02 & 1.3629e-01 & 1.5524e+00 & 1.0916e+00 \\
1000 & 8.8662e-02 & 1.1680e-01 & 2.8396e-01 & 3.2233e-01 \\
1500 & 2.2016e-02 & 1.5472e-02 & 1.0725e-01 & 1.0995e-01 \\
2000 & 3.0242e-02 & 8.9225e-03 & 2.4742e-02 & 4.3176e-02 \\
2500 & 6.2917e-03 & 9.6236e-03 & 1.7075e-02 & 3.9242e-02 \\
3000 & 2.4351e-03 & 1.4384e-03 & 2.6748e-03 & 1.6315e-02 \\
3500 & 2.0461e-03 & 1.0641e-03 & 5.2479e-03 & 1.4417e-02 \\
5000 & 1.2189e-03 & 1.3845e-03 & 2.6431e-03 & 4.9044e-03 \\
7500 & 6.3689e-04 & 8.2492e-04 & 2.5790e-03 & 3.7786e-03 \\
10000 & 7.1952e-04 & 5.5685e-04 & 1.2228e-03 & 3.0202e-03 \\
15000 & 3.9990e-04 & 4.8316e-04 & 7.3376e-04 & 2.9747e-03 \\
20000 & 3.6191e-04 & 4.8170e-04 & 8.2649e-04 & 2.6312e-03 \\
25000 & 3.6458e-04 & 5.1755e-04 & 8.9596e-04 & 2.6613e-03 \\
30000 & 3.6652e-04 & 4.9583e-04 & 8.8039e-04 & 2.6657e-03 \\
\hline
\end{tabular}
    \caption{ The absolute discrete maximum errors of Algorithm\ref{alg:DL} with $\nu=0.01,\kappa=1$.}
     \label{tab:DLnu=0.01K=1}
% \end{table}
%\begin{table}[H]\scriptsize
%     \centering
%\setlength{\tabcolsep}{2.5pt}

\begin{tabular}{ccccc}
\hline
$\#$Epoch & $||u-u^{DL}||_{0,\infty,h}$ & $||v-v^{DL}||_{0,\infty,h}$ & $||\kappa(p-p^{DL})||_{0,\infty,h}$ & $||\kappa(\varphi-\varphi^{DL})||_{0,\infty,h}$  \\
\hline
200 & 7.3931e-02 & 9.8672e-02 & 2.1117e+00 & 2.5451e+00 \\
500 & 6.2953e-02 & 3.7513e-02 & 1.5959e+00 & 1.0343e+00 \\
1000 & 5.1847e-02 & 6.2577e-02 & 7.8331e-01 & 4.5935e-01 \\
1500 & 2.6548e-02 & 1.3333e-02 & 2.3349e-01 & 2.2663e-01 \\
2000 & 3.9128e-02 & 2.1147e-02 & 9.7124e-02 & 4.4471e-02 \\
2500 & 1.6828e-02 & 6.5111e-03 & 2.9291e-02 & 2.1896e-02 \\
3000 & 7.0802e-03 & 3.0962e-03 & 1.7970e-02 & 1.3612e-02 \\
3500 & 9.6858e-04 & 1.6541e-03 & 4.0128e-03 & 8.7207e-03 \\
5000 & 1.9639e-03 & 1.0026e-03 & 1.6727e-03 & 2.1226e-03 \\
7500 & 1.0221e-03 & 7.9086e-04 & 1.0696e-03 & 2.0161e-03 \\
10000 & 6.5865e-04 & 6.6323e-04 & 9.8430e-04 & 1.1517e-03 \\
15000 & 3.9810e-04 & 5.5374e-04 & 3.6499e-04 & 8.2533e-04 \\
20000 & 3.8367e-04 & 5.3969e-04 & 3.0734e-04 & 7.5236e-04 \\
25000 & 4.1257e-04 & 5.8360e-04 & 2.7854e-04 & 8.0696e-04 \\
30000 & 4.0072e-04 & 5.7836e-04 & 2.7543e-04 & 7.7025e-04 \\
\hline
\end{tabular}
    \caption{ The absolute discrete maximum errors of Algorithm\ref{alg:DL}  with $\nu=1e-04,\kappa=1$.}
     \label{tab:DLnu=1e-4K=1}
%\end{table}
%\begin{table}[H]\scriptsize
%\centering
%\setlength{\tabcolsep}{2.5pt}
\begin{tabular}{ccccc}
\hline
$\#$Epoch & $||u-u^{DL}||_{0,\infty,h}$ & $||v-v^{DL}||_{0,\infty,h}$ & $||\kappa(p-p^{DL})||_{0,\infty,h}$ & $||\kappa(\varphi-\varphi^{DL})||_{0,\infty,h}$   \\
\hline
200 & 2.4375e-01 & 1.8926e-01 & 3.3333e-01 & 2.3097e+01 \\
500 & 1.8031e-01 & 1.7551e-01 & 3.3333e-01 & 2.0465e+01 \\
1000 & 1.1470e-01 & 2.1357e-01 & 3.3333e-01 & 2.8210e+00 \\
1500 & 8.0150e-02 & 1.1450e-01 & 3.3333e-01 & 1.1150e+00 \\
2000 & 2.4448e-02 & 6.2345e-02 & 3.3333e-01 & 6.0206e-01 \\
2500 & 5.8521e-02 & 6.4980e-02 & 3.3333e-01 & 7.5758e-01 \\
3000 & 1.2664e-02 & 2.8829e-02 & 3.3333e-01 & 4.6555e-01 \\
3500 & 1.2212e-02 & 7.3794e-03 & 3.3333e-01 & 2.7240e-01 \\
5000 & 2.0266e-03 & 3.0949e-03 & 3.3333e-01 & 2.0827e-01 \\
7500 & 1.3459e-03 & 3.1860e-03 & 3.3333e-01 & 1.9935e-01 \\
10000 & 9.9657e-04 & 2.5882e-03 & 3.3333e-01 & 1.9982e-01 \\
15000 & 9.3584e-04 & 2.8319e-03 & 3.3333e-01 & 2.0091e-01 \\
20000 & 9.1865e-04 & 2.7231e-03 & 3.3333e-01 & 1.9955e-01 \\
25000 & 9.0326e-04 & 2.7280e-03 & 3.3333e-01 & 1.9959e-01 \\
30000 & 9.0204e-04 & 2.7455e-03 & 3.3333e-01 & 1.9964e-01 \\
\hline
\end{tabular}
\caption{  The absolute discrete maximum errors of Algorithm\ref{alg:DL}  with $\nu=1e-04,\kappa=1e-08$.}
 \label{tab:DLnu=1e-4K=1e-8}
 \end{table}

\subsubsection{The performance of Algorithm \ref{alg:int-deep}}
In this part, we pay attention to the stability and effectiveness of the Algorithm \ref{alg:int-deep}. %we firstly compare the number of iterations $\#\mathrm{K}$ between Newton's method and Algorithm \ref{alg:int-deep} for mesh sizes
To this end, we first compare the iteration steps $\#\mathrm{K}$ of Newton iterative method employing different initial guesses with
$h = \frac{\pi}{2^k}$, $k=6, 7, 8$. Tables $\ref{tab:iter-nu=1K=1}-\ref{tab:iter-nu=1e-4K=1e-8}$ show that as the parameters $\nu$ and $\kappa$ decrease, the iterative steps of Algorithm \ref{alg:int-deep} remains almost unchanged, while Newton iterative method with other initial guesses need more iterative steps or even fail to convergence. Furthermore, when the parameter $\nu$ or $\kappa$ is extremely small, only Algorithm \ref{alg:int-deep} is capable of convergence.%Algorithm \ref{alg:int-deep} is capable of convergence for small values of viscosity and hydraulic conductivity while Newton's method with other initial guesses fail to convergence, which indicates the deep learning method is a better way to provide  initial guesses for Newton's method.
\begin{table}[H]
     \centering
     \begin{tabular}{c|cccc}\hline
           \diagbox[width=12em]{Initial value}{Mesh size}  & $h=\frac{\pi}{64}$ &$h=\frac{\pi}{128}$& $h=\frac{\pi}{256}$\\\hline
        $\boldsymbol{u}_{0}^{SD}$   &4   &4  &4\\
         $\boldsymbol{0}$         &5   &5  &5 \\
        $\boldsymbol{1}$         &6   &6  &6\\
        $\boldsymbol{u}_{0}^{DL}$ &5   &5  &5\\\hline
     \end{tabular}
    \caption{The number of iterations $\#$K  under different initial guesses with $\nu=1,\kappa=1$.}
     \label{tab:iter-nu=1K=1}
 \end{table}
 \begin{table}[H]
     \centering
     \begin{tabular}{c|cccc}\hline
             \diagbox[width=12em]{Initial value}{Mesh size}  & $h=\frac{\pi}{64}$ &$h=\frac{\pi}{128}$& $h=\frac{\pi}{256}$\\\hline
                $\boldsymbol{u}_{0}^{SD}$  & $\times$  &$\times$  &$\times$ \\
               $\boldsymbol{0}$       & $\times$  &$\times$  &$\times$   \\
                 $\boldsymbol{1}$        &8   &8  &8 \\
               $\boldsymbol{u}_{0}^{DL}$ & 5         &5         &5    \\\hline
     \end{tabular}
    \caption{The number of iterations $\#$K  under different initial guesses  with $\nu=0.01,\;\kappa=1$.}
     \label{tab:iter-nu=0.01K=1}
 \end{table}
\begin{table}[H]
     \centering

     \begin{tabular}{c|cccc}\hline
            \diagbox[width=12em]{Initial value}{Mesh size}  & $h=\frac{\pi}{64}$ &$h=\frac{\pi}{128}$& $h=\frac{\pi}{256}$\\\hline
                 $\boldsymbol{u}_{0}^{SD}$  & $\times$  &$\times$  &$\times$ \\
               $\boldsymbol{0}$       & $\times$  &$\times$  &$\times$   \\
                 $\boldsymbol{1}$        &$\times$   &$\times$  &$\times$ \\
               $\boldsymbol{u}_{0}^{DL}$ & 5         &5         &5    \\\hline
     \end{tabular}
    \caption{The number of iterations $\#$K  under different initial guesses  with $\nu=1e-04,\;\kappa=1$.}
     \label{tab:iter-nu=1e-4K=1}
 \end{table}
\begin{table}[H]
     \centering
     \begin{tabular}{c|cccc}\hline
          \diagbox[width=12em]{Initial value}{Mesh size} & $h=\frac{\pi}{64}$& $h=\frac{\pi}{128}$ & $h=\frac{\pi}{256}$ \\\hline
            $\boldsymbol{u}_{0}^{SD}$   & $\times$  & $\times$ &$\times$    \\
           $\boldsymbol{0}$       & $\times$  &$\times$  &$\times$     \\
            $\boldsymbol{1}$       & $\times$  &$\times$  &$\times$     \\
            $\boldsymbol{u}_{0}^{DL}$ & 6         & 5        &5         \\\hline
     \end{tabular}
    \caption{The number of iterations $\#$K  under different initial guesses  with $\nu=1e-04,\;\kappa=1e-08$.}
     \label{tab:iter-nu=1e-4K=1e-8}
 \end{table}
\begin{remark}
   "$\times$" indicates that the iterative method does not converge within $N_{\max}=20$ steps.
\end{remark}

Next, we investigate the accuracy of Algorithm \ref{alg:int-deep} for different values of viscosity coefficient $\nu$ and hydraulic conductivity $\kappa$. %We consider different mesh sizes with $h = \frac{\pi}{2^k}$, $k = 6, 7, 8$.
The relative errors for $\boldsymbol{u}, \;p $ and $\varphi$ in $L^2$ and $H^1$ norms are displayed in Table \ref{tab:Nw-nu=1K=1}$-$Table \ref{tab:Nw-nu=1e-4K=1e-8}. Here are our observations.% are as follows:  %as well as the relative $H^1$ norm of errors for $\bs{u}$ and $\varphi$ in Table \ref{tab:Nw-nu=1K=1} - Table \ref{tab:Nw-nu=1e-4K=1e-8}. Our observations are as follows:
\begin{table}[H]\scriptsize
\setlength{\tabcolsep}{2pt}
\centering
     \begin{tabular}{ccccccccccc}\hline
        &  $\|\boldsymbol{e}_{\bs{u}}^h\|_0$ &order & $\|e_{p}^h\|_0$ &order  & $\|e^h_{\varphi}\|_0$  &order  & $\|\boldsymbol{e}_{\bs{u}}^h\|_1$ &order  &  $\|e^h_{\varphi}\|_1$    &order                  \\\hline
         h$=\frac{\pi}{64}$  & 3.4269e-06&-  & 1.2657e-04 &-    & 1.6366e-06 &-& 4.2015e-04 &- &2.0316e-04 &- \\
         h$=\frac{\pi}{128}$ & 4.2851e-07& 2.9995  & 3.1603e-05 &2.0019 & 2.0458e-07 &    3.0000 & 1.0509e-04 &1.9993 & 5.0796e-05 &    1.9998\\
         h$=\frac{\pi}{256}$ & 5.3574e-08& 2.9997  & 7.8983e-06 &2.0004& 2.5573e-08 &    3.0000 &2.6279e-05  &    1.9997 &1.2700e-05 &    1.9999\\\hline
    \end{tabular}
    \caption{The convergence performance of Algorithm \ref{alg:int-deep} with $\nu=1,\kappa=1$. }
    \label{tab:Nw-nu=1K=1}
     \begin{tabular}{ccccccccccc}\hline
         &  $\|\boldsymbol{e}_{\bs{u}}^h\|_0$ &order & $\|e_{p}^h\|_0$ &order  & $\|e^h_{\varphi}\|_0$  &order  & $\|\boldsymbol{e}_{\bs{u}}^h\|_1$ &order  &  $\|e^h_{\varphi}\|_1$    &order                  \\\hline
         h$=\frac{\pi}{64}$ & 5.3721e-06  &-      &  1.2654e-04 &-      & 1.6367e-06 &-      & 5.2354e-04   &-  &2.0316e-04  &-\\
         h$=\frac{\pi}{128}$ & 5.0966e-07 &3.3979 &  3.1596e-05 &2.0017 & 2.0458e-07 &3.0000  & 1.1469e-04  &2.1905& 5.0796e-05&- \\
         h$=\frac{\pi}{256}$ & 5.6921e-08 &3.1625 &  7.8978e-06 &2.0002 & 2.5572e-08 &3.0000   & 2.7229e-05 &2.0746   & 1.2700e-05 &1.9998\\\hline
    \end{tabular}
    \caption{The convergence performance of Algorithm \ref{alg:int-deep} with $\nu=0.01,\kappa=1$. }
\centering

     \begin{tabular}{ccccccccccc}\hline
       &  $\|\boldsymbol{e}_{\bs{u}}^h\|_0$ &order & $\|e_{p}^h\|_0$ &order  & $\|e^h_{\varphi}\|_0$  &order  & $\|\boldsymbol{e}_{\bs{u}}^h\|_1$ &order  &  $\|e^h_{\varphi}\|_1$    &order                  \\\hline
         h$=\frac{\pi}{64}$ & 1.3970e-04  &-       &  1.7111e-04 &-       & 1.9279e-06&- & 1.3271e-02 &-  &2.0317e-04 &- \\
         h$=\frac{\pi}{128}$ & 1.6294e-05 &3.0999  &  3.6495e-05 &2.2292  & 2.3649e-07&3.0272  & 2.6950e-03&2.3000   & 5.0797e-05&1.9999 \\
         h$=\frac{\pi}{256}$ & 1.5417e-06 &3.4018  &  8.0830e-06  &2.1747& 2.7748e-08 &3.0913  & 4.8204e-04 &2.4831   & 1.2700e-05 &2.0000\\\hline
    \end{tabular}
    \caption{The convergence performance of Algorithm \ref{alg:int-deep} with $\nu=1e-04,\kappa=1$. }

    \label{tab:Nw-nu=1e-4K=1}
     \begin{tabular}{ccccccccccc}\hline
       &  $\|\boldsymbol{e}_{\bs{u}}^h\|_0$ &order & $\|e_{p}^h\|_0$ &order  & $\|e^h_{\varphi}\|_0$  &order  & $\|\boldsymbol{e}_{\bs{u}}^h\|_1$ &order  &  $\|e^h_{\varphi}\|_1$    &order                  \\\hline
         h$=\frac{\pi}{64}$ & 1.7757e-04&-  &  2.4257e-07 &-& 1.6367e-06  &- & 2.1475e-02 &- &2.0317e-04&-  \\
         h$=\frac{\pi}{128}$ & 2.0852e-05 &3.0901 &  1.3720e-08 &4.1440& 2.0458e-07 & 3.0000  & 4.9362e-03 &2.1212  & 5.0797e-05&1.9998 \\
         h$=\frac{\pi}{256}$ &2.1976e-06& 3.2462  &  7.7755e-10 &4.1412& 2.5573e-08 & 3.0000  & 1.1164e-03  &2.1444  & 1.2700e-05&2.0000 \\\hline
    \end{tabular}
    \caption{The convergence performance  of Algorithm \ref{alg:int-deep} with $\nu=1e-04,\kappa=1e-08$. }
    \label{tab:Nw-nu=1e-4K=1e-8}
\end{table}

\begin{enumerate}
    \item Regarding the Navier-Stokes part, as anticipated, the relative errors for velocity $\bs{u}$  in $L^{2}$ norm are of order $O(h^3)$, and the relative errors for $\bs{u}$  in the $H^1$ norm are of order $O(h^2)$ under various values of $ \nu $ and $ \kappa $.
    \item When $\kappa=1$, the order of relative $L^{2}$ norm errors for p is $O(h^2)$, which is as expected. However, when $\kappa$ is small, the order of relative errors is $O(h^4)$, which is unexpected.
    \item For the Darcy part, we observe that the relative errors for $\varphi$ in $L^{2}$ norm are of order $O(h^3)$ and the relative errors of $\varphi$ in the $H^1$ norm are of order $O(h^2)$, which is in line with our expectations. These results hold for different values of $ \nu $ and $\kappa$.
\end{enumerate}

\subsection{A 2D-example without closed form solution}
%(Lid-Driven Cavity + Heterogeneous Permeability)(cf.\cite{2021-ex2})
In this subsection, we test the Algorithm $\ref{alg:int-deep}$ for the example constructed in \cite{2021-ex2} and we make a slight adjustments here.

In the free fluid region $\Omega_{f}=(0,2) \times(0,1)$, the lid-driven problem is considered. %Dirichlet boundary conditions are posed. Specifically,
The fluid is mainly driven by a rightward velocity ($\bs{u}=[1,0]^T$) on the top side $(y=1)$, and no-slip boundary condition $(\boldsymbol{u}=\bs{0})$ is imposed on the left- and right-sides. There is no body force $(\bs{f}_f=\bs{0})$. We take the parameter $\nu=1,\;0.1,\; 0.01$.

In the porous media region $\Omega_{p}=(0,2) \times(-1,0)$.
A heterogeneous permeability $\boldsymbol{K}=\kappa \boldsymbol{I}$ is taken. In three blocks: $(0.2,0.6)\times(-0.8,-0.6),(0.8,1.2)\times(-0.7,-0.5)$ and $(1.4,1.8)\times(-0.6,-0.4)$, the permeability $\kappa$ is set to be $10^{-6}$. For the remaining region, $\kappa=1$. There is no source ($f_{p}=0$). And $\varphi=0$ is imposed on the left-, right-, and bottom-sides of the region.

For simplicity, all the parameters except $\nu$ and $\kappa$ in the coupled model are set to 1. Besides, in \eqref{eq:3.2}, we take $\gamma_{1}=200, \gamma_{2} = 1, B_{\bs{u}}(\bx)= x(x-2)(y-1)$ and $B_{\varphi}(x)= x(x-2)(y+1)$.
%But Figure \ref{fig:ex3} shows that Algorithm \ref{alg:int-deep} can capture physical features for different values of $\nu$ and $\kappa$.

%Here are some observations about the physical features.
Since there is no exact solution for comparison, we focus on corresponding physical phenomena. From Figure \ref{fig:ex3}, we can clearly observe that there is a smooth exchange of flow between the free flow (Navier-Stokes) and porous-medium flow (Darcy) across the interface ($y=0$). In Navier-Stokes region, singular pressure occurs at the two corners $(0,1)$ and $(2,1)$  and in Darcy region, the flow path is diverted around three low-permeability blocks.
With the vary of $\nu$, the fluid exhibits distinct flow patterns. For $\nu=1$ and $\nu=0.1$, fluid travels from the Navier-Stokes region to the  Darcy region when $x >1$, and from the Darcy region to the Navier-Stokes region when $x<1$. For $\nu=0.01$, fluid travels the Navier-Stokes region to the Darcy region when $x>1.3$, and from the Darcy domain to the Navier-Stokes domain when $x<1.3$.
%\scriptsize{
\begin{figure}[H]
    \begin{minipage}[c]{0.49\linewidth}
         \centering
        \includegraphics[scale=0.22]{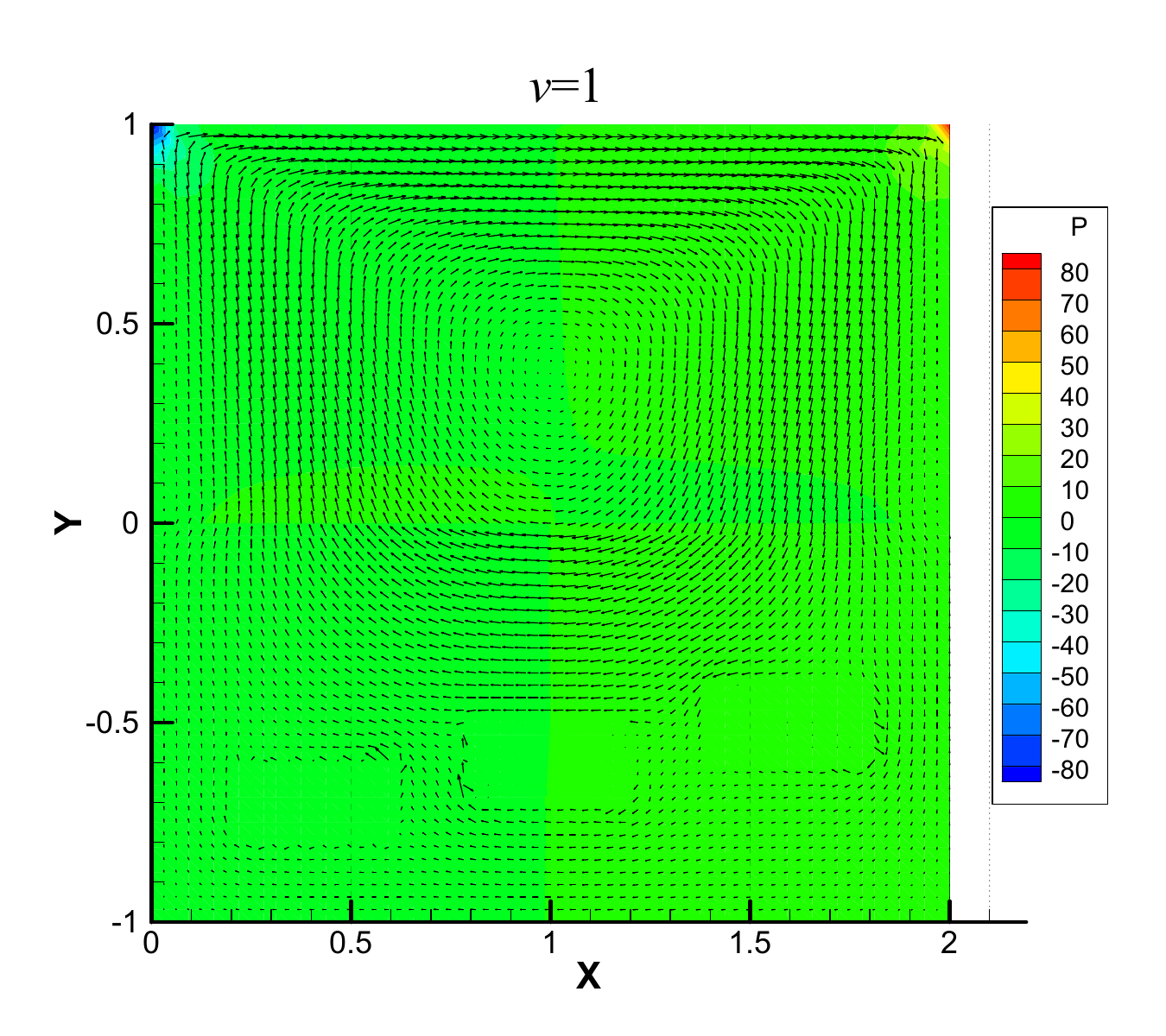}
        \label{ex5:nu=1DL}
    \end{minipage}
    \begin{minipage}[c]{0.49\linewidth}
         \centering
        \includegraphics[scale=0.22]{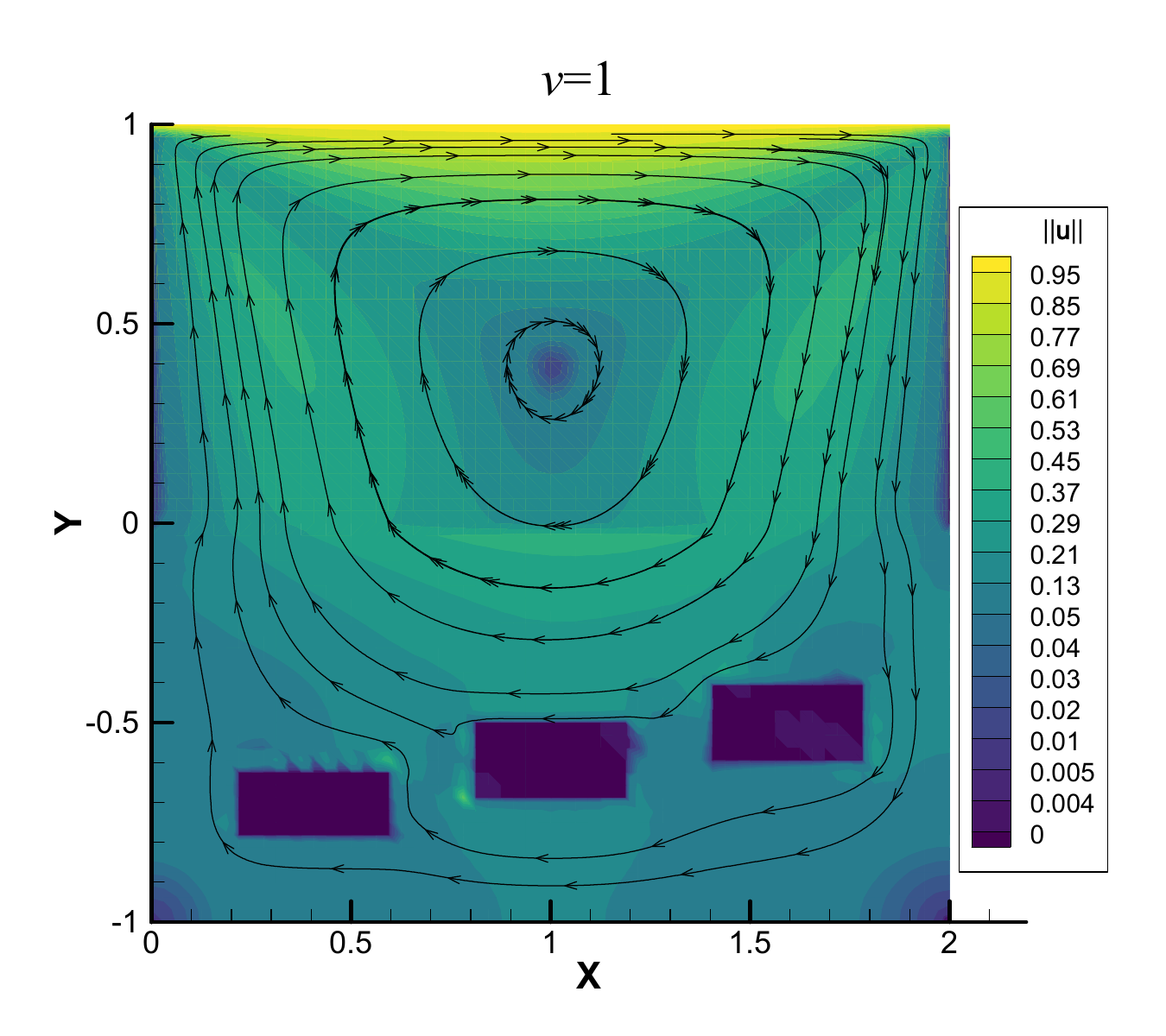}
        \label{ex5:nu=1SD}
    \end{minipage}
    %\caption{ (Left) Numerical velocity and pressure under different initial conditions on a triangular mesh with h =$\frac{1}{16}$; (Right) The streamlines of velocity.}
        \begin{minipage}{0.49\linewidth}
         \centering
        \includegraphics[scale=0.22]{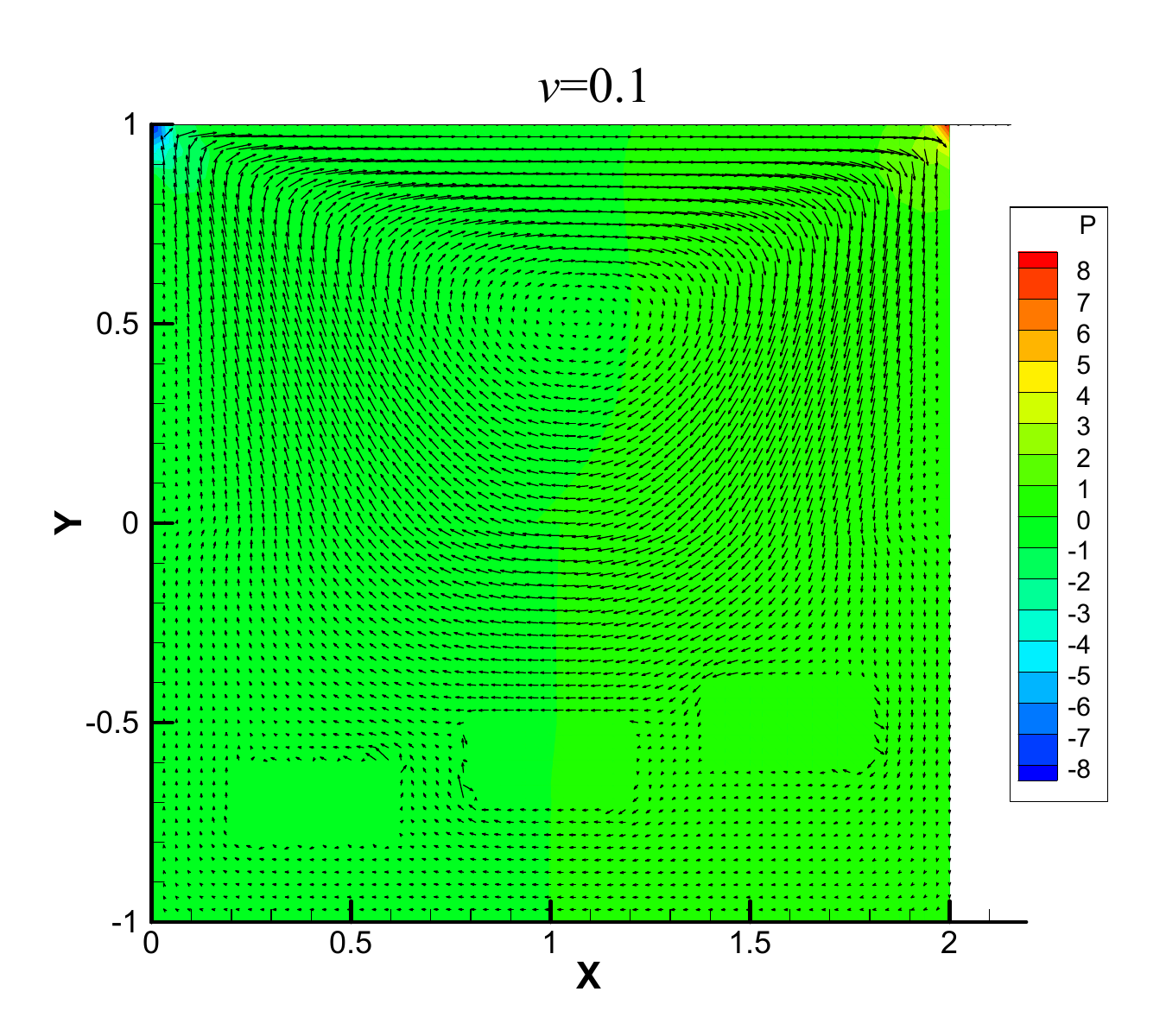}
        \label{ex5:nu=0.1DL}
    \end{minipage}
    \begin{minipage}{0.49\linewidth}
         \centering
        \includegraphics[scale=0.22]{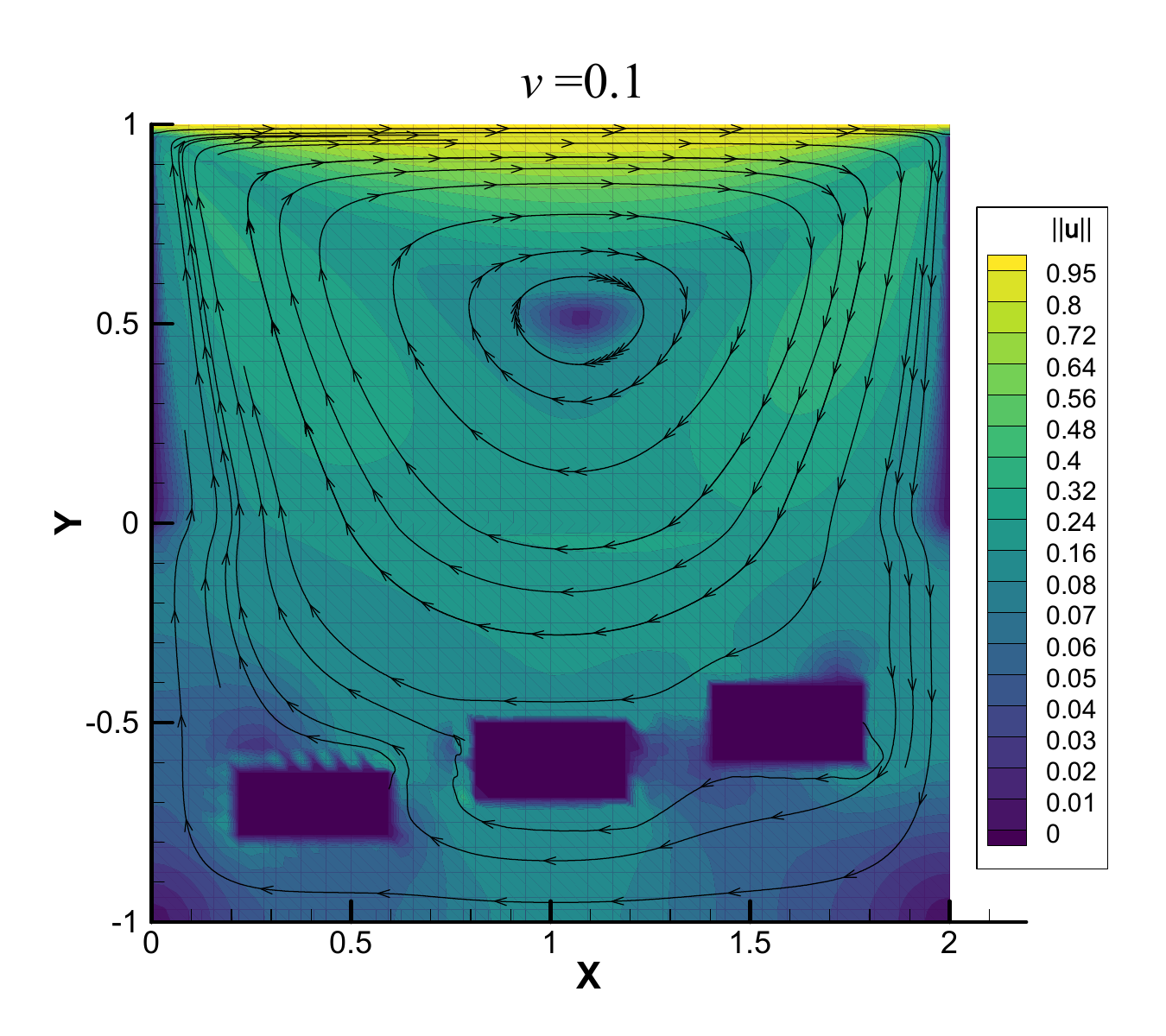}
    \end{minipage}
%    \end{figure}
%\begin{figure}[H]
    \begin{minipage}{0.49\linewidth}
         \centering
        \includegraphics[scale=0.22]{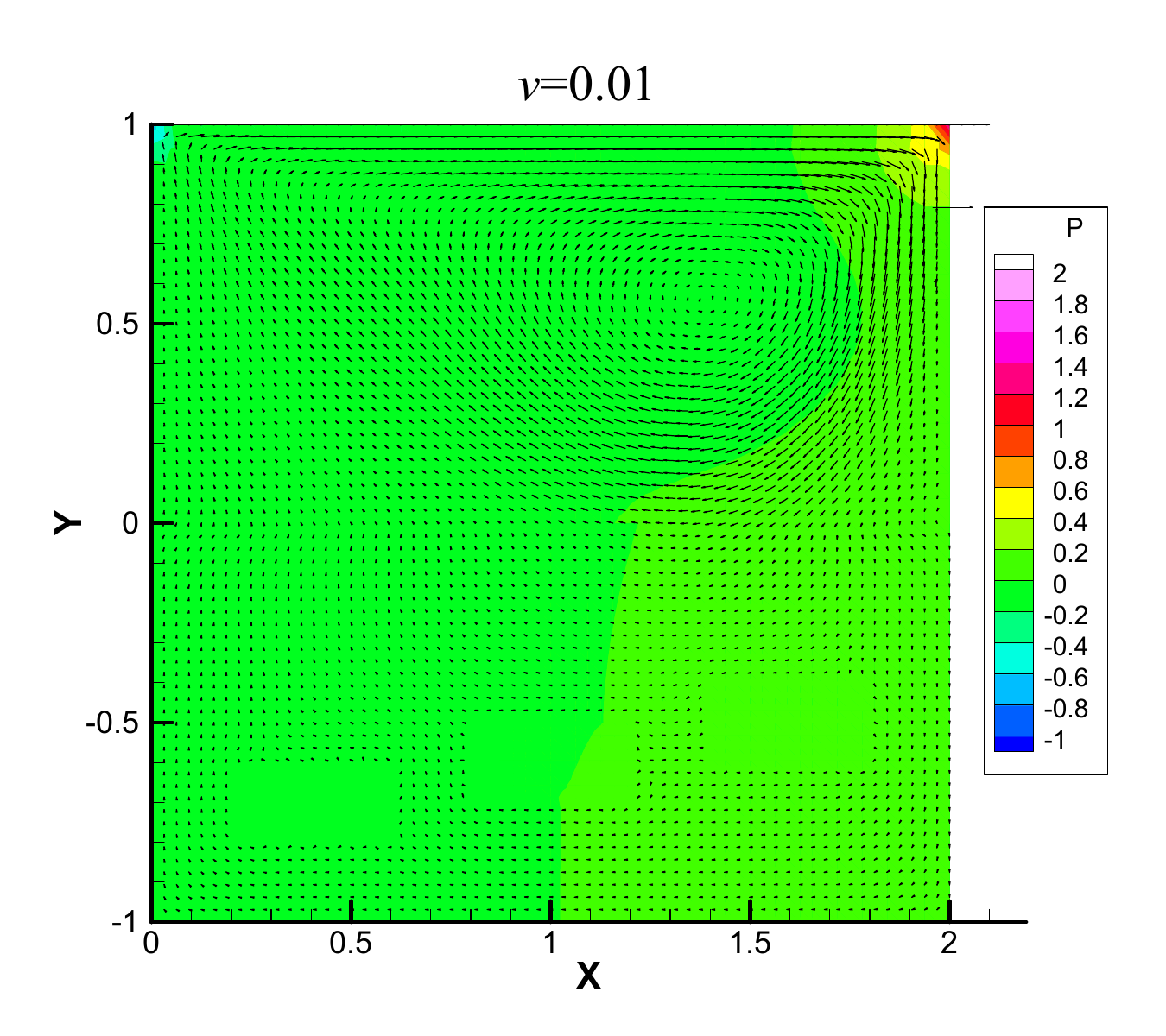}
        \label{ex5:nu=0.01DL}
    \end{minipage}
    \begin{minipage}{0.49\linewidth}
         \centering
        \includegraphics[scale=0.22]{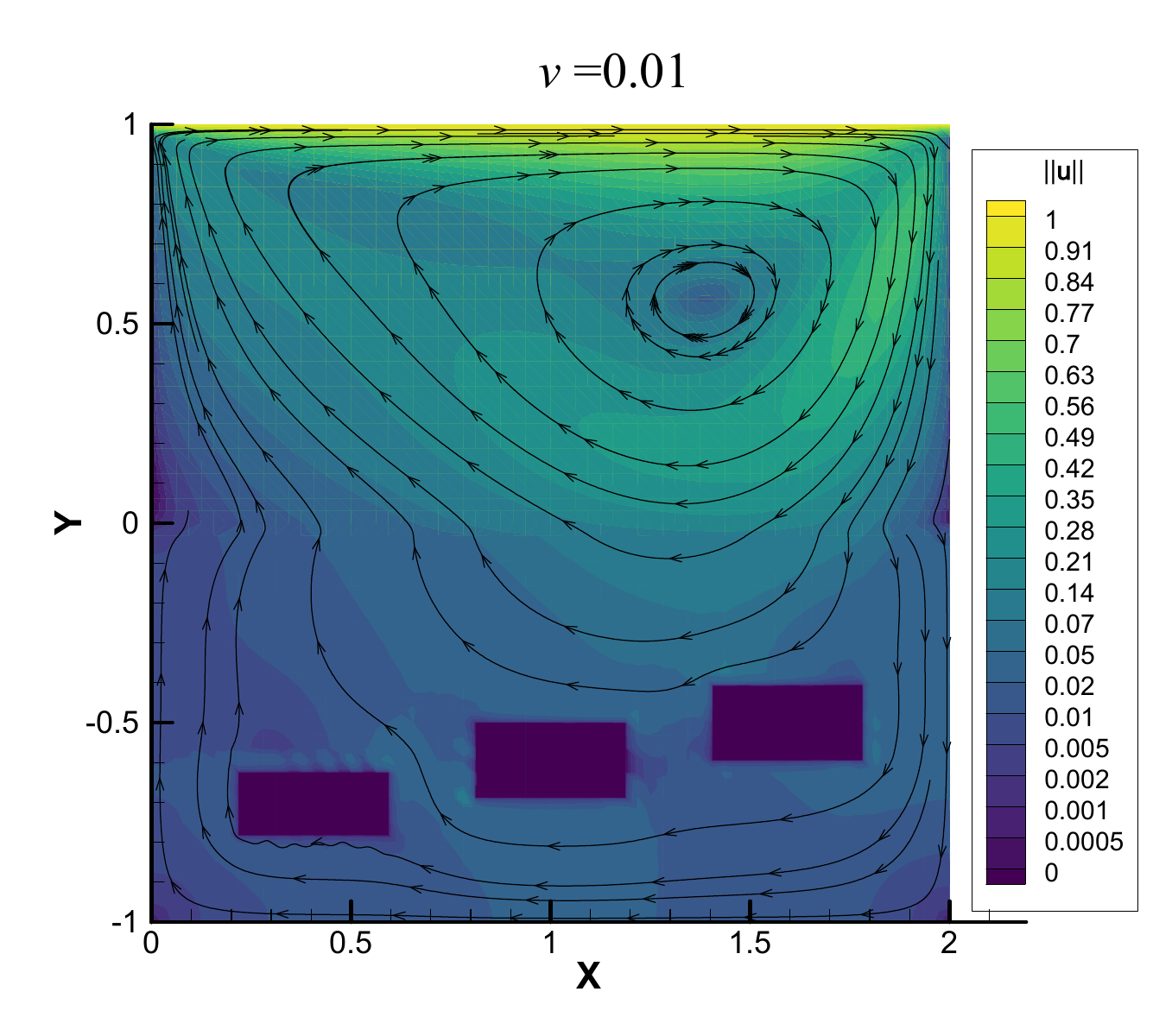}
        \label{ex5:nu=0.01SD}
    \end{minipage}
    \caption{ (Left) Numerical velocity and pressure under different initial conditions on a triangular mesh with h =$\frac{1}{16}$; (Right) The streamlines of velocity.}
    \label{fig:ex3}
\end{figure}%}

\subsection{A 3D-example with closed form solution}
 Finally, we test our algorithms for solving a three-dimensional problem with closed form solution. Let $\Omega \subset \mathbb{R}^{3}$ with $\Omega_{f}=(0,1) \times(0,1) \times(0,1), \Omega_{p}=(0,1)\times(0,1) \times$ $(1,2)$ and the interface $\Gamma=(0,1) \times\{1\}$. %The boundary conditions and right hand side functions of the coupled Navier-Stokes Darcy model are chosen so that
The exact solution $(\boldsymbol{u}, \varphi, p)$ is given by
\begin{equation*}
  \begin{cases}
        u= -(1-y)(1-z) & \text { in } \Omega_{f}, \\
        v= -(1-x)(1-z) & \text { in } \Omega_{f},\\
        w= (1-x)(1-y) & \text { in } \Omega_{f},\\
        p=(1-z)(1-x-y-z+4xyz) & \text { in } \Omega_{f},\\
        \varphi =(1-x)(1-y)(1-z) &\text { in } \Omega_{p},
  \end{cases}
\end{equation*}
where the components of $\boldsymbol{u}$ is denoted by $(u, v,w)$.
All the parameters except $\nu$ in the coupled model are set to
be 1. In \eqref{eq:3.2}, we set $\gamma_{1}=1, \gamma_{2} = 1, B_{\bs{u}}(\bx)=xyz(x-1)(y-1) $ and $B_{\varphi}(\boldsymbol{x})= xy(x-1)(y-1)(z-2)$. % where $\boldsymbol{x}=(x,y,z)$.
%To evaluate the test error of deep learning method,
%We adopt a mesh size $h=\frac{1}{32}$ to generate the uniform triangulation $\mathcal{T}_h$ as the test locations.
The uniform triangulation $\mathcal{T}_h$ with mesh size $h=\frac{1}{32}$ is adopted to be the test locations.

%{\color{blue}Similar phenomena can be observed as the 2D example with a analytical solution.}

Similar to the 2D situation, we denote the components of $\bs{u}^{DL}$ obtained by Algorithm \ref{alg:DL} as $(u^{DL},$ $v^{DL},w^{DL})$ and test the computational performance of Algorithm \ref{alg:DL} and Algorithm \ref{alg:int-deep}.  From Table \ref{tab:3dDL-nu=1}$-$ Table \ref{tab:3dDL-nu=0.01}, we observe that the absolute discrete maximum errors of $u, v$, $w$, $p$ and $\varphi$ can reach $O(10^{-3})$ or $O(10^{-4})$. However, decreasing the iteration steps cannot further improve accuracy. In order to illustrate the accuracy of the Int-Deep method, we compare the iteration steps $\#\mathrm{K}$ of Newton iterative method employing different initial guesses with $h = \frac{1}{2^k}$, $k = 2, 3, 4$. The corresponding results are presented in Tables \ref{tab:iter3d-nu=1} $-$ \ref{tab:iter3d-nu=0.01}. As we can see, when the viscosity coefficient $\nu$ decreases, only the Int-Deep method can converge with few iteration steps.

%\subsubsection{The Performance of Algorithm \ref{alg:DL}}
% In this part, we focus on the  performance of Algorithm \ref{alg:DL}, the results are shown in Table \ref{tab:3dDL-nu=1}- Table\ref{tab:3dDL-nu=0.01}.
%
% Here are some observations, for different values of viscosity, the absolute discrete maximum errors of $u, v$, $w$, $p$ and $\varphi$ can reach $O(10^{-3})$ or $O(10^{-4})$ but it is difficult to obtain a highly accurate solution using the deep learning method. Besides, the absolute discrete maximum errors of $u, v$ and $w$ can reach the order of $O(10^{-1})$ or $O(10^{-2})$ after 200 epochs.

\begin{table}[H]
\centering
\setlength{\tabcolsep}{2.5pt}
\scriptsize{
\begin{tabular}{cccccc}
\hline
$\#$Epoch & $||u-u^{DL}||_{0,\infty,h}$ & $||v-v^{DL}||_{0,\infty,h}$ & $||w-w^{DL}||_{0,\infty,h}$ & $||p-p^{DL}||_{0,\infty,h}$ & $||\varphi-\varphi^{DL}||_{0,\infty,h}$  \\
\hline
200 & 1.1280e-01 & 1.1474e-01 & 1.2573e-01 & 9.8521e-01 & 1.9437e-01 \\
500 & 5.8955e-03 & 6.0989e-03 & 4.6234e-03 & 1.8171e-01 & 7.3369e-03 \\
1000 & 3.9364e-03 & 4.6433e-03 & 3.1881e-03 & 6.4626e-02 & 4.4960e-03 \\
1500 & 8.2522e-03 & 8.5444e-03 & 8.1683e-03 & 2.9225e-02 & 2.2214e-03 \\
2000 & 1.0683e-03 & 1.0316e-03 & 7.5461e-04 & 1.5038e-02 & 9.4810e-04 \\
2500 & 2.4907e-03 & 2.4303e-03 & 2.6002e-03 & 1.2858e-02 & 8.1199e-04 \\
2500 & 2.4907e-03 & 2.4303e-03 & 2.6002e-03 & 1.2858e-02 & 8.1199e-04 \\
3000 & 7.7432e-04 & 1.1575e-03 & 1.1627e-03 & 8.1957e-03 & 1.2877e-03 \\
3500 & 4.7258e-04 & 6.2618e-04 & 4.2063e-04 & 6.4445e-03 & 8.3823e-04 \\
5000 & 3.6597e-04 & 6.0589e-04 & 3.5250e-04 & 7.9574e-03 & 6.8734e-04 \\
7500 & 2.8277e-04 & 4.7100e-04 & 3.0724e-04 & 3.4871e-03 & 4.7562e-04 \\
10000 & 1.3952e-04 & 3.0117e-04 & 2.9271e-04 & 3.3414e-03 & 3.3065e-04 \\
15000 & 1.6717e-04 & 2.4263e-04 & 2.3134e-04 & 2.9260e-03 & 2.9287e-04 \\
20000 & 1.3941e-04 & 2.2699e-04 & 2.1378e-04 & 2.5956e-03 & 2.9260e-04 \\
\hline
\end{tabular}}
    \caption{The absolute discrete maximum errors of Algorithm\ref{alg:DL}  with $\nu=1,\kappa=1$.}
    \label{tab:3dDL-nu=1}
\end{table}

\begin{table}
\centering
\setlength{\tabcolsep}{2.5pt}
\scriptsize{
\begin{tabular}{cccccc}
\hline
$\#$Epoch & $||u-u^{DL}||_{0,\infty,h}$ & $||v-v^{DL}||_{0,\infty,h}$ & $||w-w^{DL}||_{0,\infty,h}$ & $||p-p^{DL}||_{0,\infty,h}$ & $||\varphi-\varphi^{DL}||_{0,\infty,h}$  \\
\hline
200 & 4.9421e-02 & 3.9340e-02 & 1.1578e-01 & 3.6239e-02 & 4.6330e-02 \\
500 & 2.6120e-02 & 1.6990e-02 & 3.8038e-02 & 8.8871e-03 & 1.4511e-02 \\
1000 & 1.7341e-02 & 1.1687e-02 & 2.1754e-02 & 8.6221e-03 & 8.6668e-03 \\
1500 & 1.3706e-02 & 7.0937e-03 & 1.6307e-02 & 2.8401e-03 & 4.4740e-03 \\
2000 & 1.2608e-02 & 5.5066e-03 & 1.3255e-02 & 2.1570e-03 & 3.2142e-03 \\
2500 & 9.4278e-03 & 5.1800e-03 & 1.1713e-02 & 2.0543e-03 & 2.5913e-03 \\
2500 & 9.4278e-03 & 5.1800e-03 & 1.1713e-02 & 2.0543e-03 & 2.5913e-03 \\
3000 & 8.6336e-03 & 3.9181e-03 & 9.9709e-03 & 2.0516e-03 & 2.3925e-03 \\
3500 & 7.9073e-03 & 3.6801e-03 & 9.5410e-03 & 2.6332e-03 & 1.8229e-03 \\
5000 & 5.8340e-03 & 3.3927e-03 & 7.2748e-03 & 7.1876e-04 & 1.3675e-03 \\
7500 & 3.9786e-03 & 3.3174e-03 & 5.5008e-03 & 3.2889e-04 & 1.0128e-03 \\
10000 & 3.3260e-03 & 2.7889e-03 & 4.4905e-03 & 2.9798e-04 & 8.3913e-04 \\
15000 & 2.7782e-03 & 2.3925e-03 & 3.9075e-03 & 1.8952e-04 & 6.8426e-04 \\
20000 & 2.6754e-03 & 2.3359e-03 & 3.8142e-03 & 2.1757e-04 & 7.0270e-04 \\
\hline
\end{tabular}}
    \caption{The absolute discrete maximum errors of Algorithm \ref{alg:DL}  with $\nu=0.01,\;\kappa=1$.}
    \label{tab:3dDL-nu=0.01}
\end{table}
%\subsubsection{The performance of Algorithm \ref{alg:int-deep}}
%In this part, we compare the iteration steps $\#\mathrm{K}$ of Newton¡¯s method employing different initial
%values with $h = \frac{1}{2^k}$, k = 6, 7, 8.
%The corresponding results are presented in Tables \ref{tab:iter3d-nu=1} to \ref{tab:iter3d-nu=0.01}. From Table \ref{tab:iter3d-nu=0.01} we can see that the number of iterations $\#\mathrm{K}$ of Int-Deep algorithm is less than Newton's method with other initial guesses  when $\nu=0.01$, which also suggests the deep learning method is a better way to provide initial guesses for Newton¡¯s method.
\begin{table}[H]
     \centering
     \begin{tabular}{c|cccc}\hline
       \diagbox[width=12em]{Initial value}{Mesh size}  & $h=\frac{1}{4}$&$h=\frac{1}{8}$ &$h=\frac{1}{16}$\\\hline
    $\boldsymbol{u}_{0}^{SD}$ &4 &4   &4  \\
    $\boldsymbol{0}$ &5 &5   &5  \\
    $\boldsymbol{1}$ &5 &5   &5  \\
    $\boldsymbol{u}_{0}^{DL}$ &4 &4   &4 \\\hline
     \end{tabular}
    \caption{The number of iterations $\#$K  under different initial values with $\nu=1,\;\kappa=1$.}
     \label{tab:iter3d-nu=1}
\end{table}
\begin{table}[H]
     \centering
     \begin{tabular}{c|cccc}\hline
       \diagbox[width=12em]{Initial value}{Mesh size}  & $h=\frac{1}{4}$&$h=\frac{1}{8}$ &$h=\frac{1}{16}$& \\\hline
    $\boldsymbol{u}^{SD}_{0}$ &7 &7   &7  \\
    $\boldsymbol{0}$ &8 &8   &8  \\
    $\boldsymbol{1}$ &8 &8   &8  \\
    $\boldsymbol{u}^{DL}_{0}$ &5 &5   &5  \\\hline
     \end{tabular}
    \caption{The number of iterations $\#$K  under different initial values with $\nu=0.01,\;\kappa=1$.}
     \label{tab:iter3d-nu=0.01}
     \end{table}

%correct
Table \ref{tab:3derr-nu=1K=1}$-$Table \ref{tab:3derr-nu=0.01K=1} show that the orders  of relative errors in $L^{2}$ norm for $\bs{u},p$ and $\varphi$ are $O(h^{3.7}),O(h^{2})$ and $O(h^{3}),$  respectively, and the orders of relative errors in $H^{1}$ norm for $\bs{u}$ and $\varphi$ are $O(h^{2.7})$ and $O(h^{2}),$ respectively, which is consistent with theoretical results.

\begin{table}[H]\scriptsize
\setlength{\tabcolsep}{2pt}
\centering
     \begin{tabular}{ccccccccccc}\hline
        &  $\|\boldsymbol{e}_{\bs{u}}^h\|_0$ &order & $\|e_{p}^h\|_0$ &order  & $\|e^h_{\varphi}\|_0$  &order  & $\|\boldsymbol{e}_{\bs{u}}^h\|_1$ &order  &  $\|e^h_{\varphi}\|_1$    &order  \\\hline
        h$=\frac{1}{4}$ &1.2611e-04 &-      & 1.4169e-03   &-      &7.8653e-02   &-  & 7.5859e-04  & -  & 1.3749e-02&-  \\
         h$=\frac{1}{8}$ &9.6019e-06 &3.7152  & 1.9505e-02 &2.0117 & 9.3657e-05  & 3.0179  & 2.2331e-04 &2.6656   & 3.4647e-03& 1.9886 \\
         h$=\frac{1}{16}$ & 7.2685e-07&3.7236  & 4.8542e-03 &2.0065   & 1.1678e-05&3.0036  &  3.4100e-05&2.7112   & 8.6976e-04&1.9940 \\\hline
    \end{tabular}
    \caption{The errors of Algorithm \ref{alg:int-deep} with $\nu=1,\;\kappa=1$. }
    \label{tab:3derr-nu=1K=1}
     \begin{tabular}{ccccccccccc}\hline
        &  $\|\boldsymbol{e}_{\bs{u}}^h\|_0$ &order & $\|e_{p}^h\|_0$ &order  & $\|e^h_{\varphi}\|_0$  &order  & $\|\boldsymbol{e}_{\bs{u}}^h\|_1$ &order  &  $\|e^h_{\varphi}\|_1$    &order                          \\\hline
        h$=\frac{1}{4}$ &1.1766e-02  &-  & 7.8758e-02 &-& 7.9584e-04 &- & 1.3575e-01 &-      &1.3819e-02&- \\
         h$=\frac{1}{8}$ &8.8893e-04 &3.7264  &1.9508e-02 &2.0134 & 9.4602e-05&3.0725 & 2.1471e-02  &2.6605      & 3.4671e-03 &1.9948  \\
         h$=\frac{1}{16}$ & 6.6064e-05&3.7501 & 4.8543e-03&2.0067 &  1.1691e-05&3.0165 & 3.2525e-03 &2.7227     & 8.6985e-04  &1.9949 \\\hline
    \end{tabular}
    \caption{The errors of Algorithm \ref{alg:int-deep} with $\nu=0.01,\;\kappa=1$. }
    \label{tab:3derr-nu=0.01K=1}
\end{table}

\section{Conclusion}

In this paper, we are concerned with Newton's method and its hybrid with machine learning for solving Navier-Stokes Darcy model. We develop the convergence analysis of Newton's method for the coupled nonlinear problem with mixed element discretization. Then, we develop a deep learning initialized iterative (Int-Deep) method for Navier-Stokes Darcy model. In this method, we employ the neural network solution generated by a few training steps as the initial guess for Newton iterative method with the FEM discretized problem. This hybrid method can converge to the true solution with the accuracy of FEM method rapidly. Numerical experiments also show that the method is robust with respect to the underlying physical parameters.

%\bibliographystyle{siamplain}
%\bibliography{refs.bib}

\end{document}